\def\eps{\varepsilon }
\def\CC{\mathbb C}
\def\RR{\mathbb R}
\def\ZZ{\mathbb Z}
\def\TT{\mathbb T}
\newcommand{\set}[1]{\left\lbrace #1\right\rbrace}%set
\providecommand{\abs}[1]{\left\lvert#1\right\rvert}
\providecommand{\norm}[1]{\left\lVert#1\right\rVert}
\newcommand{\qtq}[1]{\quad\text{#1}\quad}%\quad\text{and}\quad
\newcommand{\vv}[1]{\left\langle#1\right\rangle}%scalar product
\newcommand{\bj}[1]{\left(#1\right)}
\DeclareMathOperator{\dist}{dist}
\newtheorem{theorem}{Theorem}[section]
\newtheorem{proposition}[theorem]{Proposition}
\newtheorem{lemma}[theorem]{Lemma}
\newtheorem{corollary}[theorem]{Corollary}
\theoremstyle{definition}
\newtheorem*{definition}{Definition}
\newtheorem*{definitions}{Definitions}
\theoremstyle{remark}
\newtheorem{remark}{Remark}
\newtheorem{remarks}{Remarks}
\newtheorem{example}{Example}
\numberwithin{equation}{section}
\begin{document}

\title[Control results for ZK equation]{Observability of the linear
Zakharov--Kuznetsov equation}

\author[1]{\fnm{Roberto} \sur{de A. Capistrano--Filho}}\email{roberto.capistranofilho@ufpe.br}
%\equalcont{These authors contributed equally to this work.}
\author*[2]{\fnm{Vilmos} \sur{Komornik}}\email{komornik@math.unistra.fr}
%\equalcont{These authors contributed equally to this work.}

\author[3]{\fnm{Ademir F.} \sur{Pazoto}}\email{ademir@im.ufrj.br}
%\equalcont{These authors contributed equally to this work.}

\affil[1]{\orgdiv{Departamento de Matemática}, \orgname{Universidade Federal de Pernambuco}, \orgaddress{\street{S/N Cidade Universitária}, \city{Recife}, \postcode{50740-545}, \state{PE}, \country{Brazil}}}

\affil*[2]{\orgdiv{Département de mathématique}, \orgname{Université de Strasbourg}, \orgaddress{\street{7 rue René Descartes}, \city{67084 Strasbourg Cedex}, \country{France}}, orcid number: 0000-0001-8061-7374}

\affil[3]{\orgdiv{Instituto de Matemática}, \orgname{Universidade Federal do Rio de Janeiro}, \orgaddress{\street{Cidade Universitária - Ilha do Fundão}, \city{Rio de Janeiro}, \postcode{,
21945-970}, \state{RJ}, \country{Brazil}}}

\abstract{We study the linear Zakharov--Kuznetsov equation with periodic boundary conditions. Employing some tools from the nonharmonic Fourier series we obtain several internal observability theorems. Then we prove various exact controllability and rapid uniform stabilization results by applying a duality principle and a general feedback construction. The method presented here introduces a new insight into the control of dispersive equations {in two-dimensional cases } and may be adapted to more general equations.
}

\keywords{KdV type equations, internal observability,  controllability,  feedback stabilization, nonharmonic analysis, Diophantine analysis}

\pacs[MSC Classification]{35Q93, 42C10, 93C05, 93D20}

\maketitle

\section{Introduction} The Zakharov--Kuznetsov equation
$$
u_t+u_x+u_{xxx}+u_{xyy}+uu_x=0
$$
was originally derived by Zakharov and Kuznetsov \cite{ZK1974} in three dimensions to describe weakly magnetized ion-acoustic waves in a strongly magnetized plasma. The two-dimensional equation still has some physical meaning, modeling the behavior of weakly nonlinear ion-acoustic waves in a plasma comprising cold ions and hot isothermal electrons in the presence of a uniform magnetic field (see \cite{MoPa-99,MoPa-00} for the details and \cite{LaLiSa-13} for rigorous mathematical treatment.)

In this paper, we consider the linearized Zakharov--Kuznetsov (ZK) equation
\begin{equation}\label{11}
u_t+u_x+u_{xxx}+u_{xyy}=0
\end{equation}
on the two-dimensional torus $\TT\times\TT$, where $u(x,y,t)$ denotes a real-valued function of three real variables $x$, $y$ and $t$, and $\TT$ denotes the unit circle. As a higher-dimensional analog of the Korteweg--de Vries equation it has also been extensively studied and the body of literature is huge {(see below)}. In the following, we deliver an overview of the well-posedness and the controllability theory in two dimensions.

\subsection{Historical background}

The Zakharov--Kuznetsov equation and its linearized version are special cases of a broad class of dispersive equations for which the well-posedness theory associated with the pure initial-value problem posed on the whole plane has been intensively investigated. In this respect, we can mention the works \cite{Faminskii-95, Kinoshita-21, LinPas-09, MolPil-15} and the references therein. As for the periodic case, the authors of the pioneering work \cite{LinPanRo-19} proved that the initial value problem for the Zakharov--Kuznetsov equation is locally well-posed in the Sobolev spaces $H^s(\mathbb{T}\times\mathbb{T})$ for $s > 5/3$. Their main tool was a variant of Strichartz's estimate. By using a nonlinear Loomis--Whitney type inequality, the assumption $s > 5/3$ was weakened to $s>1$ in \cite{KinSch-21}. More recently, it was shown by Osawa \cite{Osawa-22} that the two-dimensional Zakharov--Kuznetsov equation is also locally well-posed in $H^s(\mathbb{R}\times\mathbb{T})$, for $ 9/10 < s < 1$.

By contrast, the mathematical theory pertaining to the study of controllability properties is considerably less advanced, especially in what concerns the two-dimensional linear dispersive-type equations on a periodic setting. To our knowledge, the first result on the subject was obtained by Rivas and Sun in \cite{RiSu-20}, where the authors studied the internal controllability of the Kadomtsev--Petviashvili II equation. More precisely, by applying the Hilbert Uniqueness Method due to Lions \cite{Lions-1988} and the techniques of semiclassical and microlocal analysis, they proved the controllability in $L^2(\mathbb{T}\times\mathbb{T})$ from a vertical strip. Additionally, a negative result for the controllability in $L^2(\mathbb{T}\times\mathbb{T})$ from a horizontal strip is also shown.

Recently, Pastor and Vielma Leal \cite{LePa-22} applied the moment method \cite{Russell-78} to establish the controllability of several two-dimensional linear dispersive equations posed on the two-dimensional torus in the framework of Sobolev spaces $H^s(\mathbb{T}\times\mathbb{T})$ with $s\geq 0$. To obtain the results, the author assumes that the control has a specific form and acts on the union of a narrow vertical and a narrow horizontal strip. As an application of the controllability results, they built an appropriate linear bounded operator to show that the resulting closed-loop system is exponentially stable with an arbitrary exponential decay rate. We emphasize that this is a direct consequence of the Hilbert Uniqueness Method mentioned above and the classical principle: Exact controllability implies exponential stabilizability for conservative control systems \cite{Liu-1997,Slemrod-1974}. Their analysis also covers the linearized equation \eqref{11}. In this context, it is worth mentioning the work \cite{CaGoMu} where Capistrano-Filho et al investigated the nonlinear Kawahara--Kadomtsev--Petviashvili II equation with a damping mechanism $a(x, y)u$, acting as a feedback-damping mechanism, and an anti-damping mechanism, preventing energy decrease. In addition, Rosier and Mo Chen addressed in \cite{MoRo-20} the exact controllability of the linear equation \eqref{11} on a rectangle with a left Dirichlet boundary control by using the flatness approach.

\subsection{Problem setting}
In this paper, we are mainly concerned with the internal observability,  controllability, and rapid stabilization of the linear ZK equation with $2\pi$-periodic boundary condition.
More precisely, given a subset $\Omega\subset \TT\times\TT$ and a Hilbert space $\mathcal{H}$ of initial data, we study the following questions:
\vspace{0.2cm}

\textbf{Problem $\mathcal{A}$:}
Given $T>0$, is it possible to recover, at least theoretically, the initial data $u_0\in\mathcal{H}$ from the observation of the solution of the homogeneous system
\begin{equation}\label{13}
\begin{cases}
&u_t+u_x+u_{xxx}+u_{xyy}=0\qtq{in}\TT\times\TT\times\RR,\\
&u(0)=u_0\qtq{in}\TT\times\TT
\end{cases}
\end{equation}
on $\Omega\times (0,T)$?
\vspace{0.2cm}

Here and in the sequel we often consider $u$ as a function of $t$ with values in a space of functions of $(x,y)$, and we write $u(t)$  instead of $u(\cdot,\cdot,t)$ for brevity.

\vspace{0.2cm}
\textbf{Problem $\mathcal{B}$:}
Given $T>0$ and $z_0, z_1\in\mathcal{H}$, can we find an appropriate control input $v$ supported on $\Omega\times (0,T)$ such that the corresponding solution $z$ of the system
\begin{equation}\label{14}
\begin{cases}
&z_t+z_x+z_{xxx}+z_{xyy}=v\qtq{in}\TT\times\TT\times\RR,\\
&z(0)=z_0\qtq{in}\TT\times\TT
\end{cases}
\end{equation}
satisfies the final condition $z(T)=z_1$?
\vspace{0.2cm}

\textbf{Problem $\mathcal{C}$:}
Given $\omega>0$ (arbitrarily large), can we construct a feedback law $v=F_{\omega}z$ such that the solutions of the resulting closed-loop system \eqref{14} decays exponentially to zero at least as fast as the function $e^{-\omega t}$?
\vspace{0.2cm}

We emphasize that we are interested in finding ``small'' observation and control domains $\Omega$. Problem $\mathcal{A}$ will be answered by employing diverse tools from nonharmonic analysis and Diophantine approximations, and specific geometric constructions. Based on the observability results, Problem $\mathcal{B}$ will be solved by applying the observation-control duality theory of Dolecki and Russell \cite{DolRus-1977} in the set-up of Lions \cite{Lions-1988}. Finally, Problem $\mathcal{C}$ will be solved by applying a variant of the Hilbert Uniqueness Method, developed in \cite{Kom-1997}.

\subsection{Notation and main results}
To state our results we introduce some notations. Given any two real numbers $r$ and $s$, we define the Hilbert space $H^{r,s}(\TT\times\TT)$ as the completion of the vector space spanned by the functions
\begin{equation*}
(x,y)\mapsto e^{i(mx+ny)}
\end{equation*}
with respect to the Euclidean norm
\begin{equation*}
\norm{\sum_{m,n\in\ZZ}c_{m, n}e^{i(mx+ny)}}_{r,s}
:=\bj{\sum_{m,n\in\ZZ}\bj{(1+m^2)^r+(1+n^2)^s}\abs{c_{m, n}}^2}^{1/2}.
\end{equation*}
Note that $L^2(\TT\times\TT)=H^{0,0}(\TT\times\TT)$.

We will also need a special subspace of $H^{r,s}(\TT\times\TT)$, namely the orthogonal complement of the functions $u(x,y)$ depending only on the variable $y$. It will be denoted by $H^{r,s}_x(\TT\times\TT)$, and we write $L^2_x(\TT\times\TT)$ instead of $H^{0,0}_x(\TT\times\TT)$. Finally, as usual, we will identify the $L^2$ spaces occurring in this paper with their duals.

We start by investigating the observability of the homogeneous system \eqref{13}. Our first result states that the initial data may be uniquely determined by the observation of the solutions on any fixed vertical segment during an arbitrarily small time interval (see the figure below). More precisely, we have the following result:

\begin{theorem}\label{t11}
For any fixed $x_0\in\TT$ and bounded interval $I$ of length $|I|>0$, there exist two  constants $C_1, C_2>0$ such that all solutions of \eqref{13} with $u_0\in L^2(\TT\times\TT)$  satisfy the following estimates:
\begin{equation}\label{15}
C_1\norm{u_0}_{L^2(\TT\times\TT)}^2 
\le\int_I\int_{\TT}\abs{u(x_0,y,t)}^2\ dy\ dt
\le C_2\norm{u_0}_{L^2(\TT\times\TT)}^2. %\sum_{m,n\in\ZZ}\abs{c_{m, n}}^2.
\end{equation}
\end{theorem}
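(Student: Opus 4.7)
The plan is to diagonalize the equation in the standard Fourier basis $\{e^{i(mx+ny)}\}_{m,n\in\ZZ}$ of $L^2(\TT\times\TT)$ and to reduce \eqref{15} to a family of one-dimensional nonharmonic Fourier estimates indexed by the transverse mode $n$. Inserting the ansatz $e^{i(mx+ny-\omega t)}$ into \eqref{11} gives the dispersion relation $\omega_{m,n}=m(1-m^2-n^2)$, so writing $u_0=\sum_{m,n}c_{m,n}e^{i(mx+ny)}$ the unique solution of \eqref{13} reads
\begin{equation*}
u(x,y,t)=\sum_{m,n\in\ZZ}c_{m,n}e^{i(mx+ny-\omega_{m,n}t)}.
\end{equation*}
Freezing $x=x_0$ and applying Parseval in the $y$ variable at each fixed $t$ yields, after time integration,
\begin{equation*}
\int_I\int_\TT\abs{u(x_0,y,t)}^2\,dy\,dt = 2\pi\sum_{n\in\ZZ}\int_I\abs{f_n(t)}^2\,dt,
\end{equation*}
where $f_n(t):=\sum_{m\in\ZZ}c_{m,n}e^{imx_0}e^{-i\omega_{m,n}t}$. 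Since $\abs{e^{imx_0}}=1$ and $\norm{u_0}_{L^2(\TT\times\TT)}^2$ is comparable to $\sum_{m,n}\abs{c_{m,n}}^2$, the inequality \eqref{15} becomes equivalent to the one-variable estimate
\begin{equation*}
C_1\sum_m\abs{c_{m,n}}^2 \le \int_I\abs{f_n(t)}^2\,dt \le C_2\sum_m\abs{c_{m,n}}^2
\end{equation*}
with constants independent of $n$.

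The next step is a gap analysis for the sequence $(\omega_{m,n})_{m\in\ZZ}$ at fixed $n$. A direct calculation gives
\begin{equation*}
\omega_{m+1,n}-\omega_{m,n}=-\bj{n^2+3m(m+1)},
\end{equation*}
so for every fixed $n$ the consecutive gaps grow like $3m^2$ as $\abs{m}\to\infty$, while the minimum gap equals $n^2$, attained at $m=0$ and $m=-1$. Two regimes then emerge. Choosing $N$ so large that $N^2>2\pi/|I|$, the classical Ingham inequality applies uniformly to all modes $\abs{n}\ge N$ with constants depending only on $N$ and $|I|$. For the finitely many residual values $\abs{n}<N$, I would invoke a Beurling or Komornik--Loreti Ingham theorem tailored to sequences whose consecutive gaps diverge to $\infty$: this delivers a nontrivial observability inequality on any interval $I$ of positive length, and since it is applied to only finitely many values of $n$, the constants may be combined by taking the minimum. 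The upper bound is produced by the parallel Bessel-type arguments with the corresponding uniform constant, summed over $n$.

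The main obstacle lies in the low-frequency resonance at $n=0$: since $\omega_{m,0}=m(1-m^2)$ vanishes simultaneously for $m\in\{-1,0,1\}$, three different spatial modes share the same (zero) time frequency, and their contributions to $f_0(t)$ collapse into the single constant $c_{0,0}+c_{1,0}e^{ix_0}+c_{-1,0}e^{-ix_0}$. A short verification confirms this is the only coincidence: the equation $\omega_{m,n}=\omega_{m',n}$ with $m\neq m'$ forces $m^2+mm'+m'^2=1-n^2$, whose integer solutions occur only for $n=0$ and reproduce the triple above. To complete the argument I would first isolate the three stationary modes, apply the generalized Ingham estimate to the strictly gap-separated frequency set $\{0\}\cup\{\omega_{m,0}:\abs{m}\ge 2\}$ (whose minimal gap is $6$), and then exploit the explicit geometric factor $e^{imx_0}$ together with the reality condition $c_{-m,-n}=\overline{c_{m,n}}$ to disentangle the contributions of $m\in\{-1,0,1\}$. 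Once a uniform estimate for every $f_n$ is in hand, summation over $n$ via Parseval closes the argument and yields the two-sided inequality \eqref{15}.
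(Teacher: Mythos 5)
Your overall architecture---expanding in $e^{iny}$, applying Parseval in $y$, and reducing \eqref{15} to an Ingham inequality for each family $(\omega_{m,n})_{m\in\ZZ}$ with constants uniform in $n$ (the classical theorem for $\abs{n}\ge N$ with $N^2>2\pi/|I|$, and a finite-defect Ingham/Beurling theorem for the finitely many remaining $n$)---is exactly the paper's proof. The gap analysis, the uniformity of the constants, and the upper bound are all fine. The genuine problem is the endgame at $n=0$, which you correctly flag but only promise to resolve; in fact it cannot be resolved as you propose.

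Since $\omega_{-1,0}=\omega_{0,0}=\omega_{1,0}=0$, the $n=0$ component of the observation is
\begin{equation*}
\int_I\Bigl|\,A+\sum_{\abs{m}\ge2}c_{m,0}e^{imx_0}e^{i\omega_{m,0}t}\Bigr|^2\,dt,
\qquad A:=c_{-1,0}e^{-ix_0}+c_{0,0}+c_{1,0}e^{ix_0},
\end{equation*}
and the generalized Ingham estimate for the separated set $\set{0}\cup\set{\omega_{m,0}:\abs{m}\ge2}$ controls only $\abs{A}^2+\sum_{\abs{m}\ge2}\abs{c_{m,0}}^2$. The quantity $A$ is a rank-one linear functional of the three unknowns $c_{-1,0},c_{0,0},c_{1,0}$, so neither the factor $e^{imx_0}$ nor the reality condition (three real unknowns against the single real number $A=c_{0,0}+2\operatorname{Re}(c_{1,0}e^{ix_0})$) can ``disentangle'' them: the observation map has a two-dimensional kernel spanned by $e^{\pm i(x-x_0)}-1$. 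Concretely, $u_0(x,y)=1-e^{i(x-x_0)}$ (or the real datum $\cos(x-x_0)-1$) generates a stationary solution of \eqref{13} that vanishes identically on $\set{x_0}\times\TT\times\RR$, so the left inequality in \eqref{15} fails for this nonzero $u_0\in L^2(\TT\times\TT)$. Your resonance check is therefore not a technicality to be absorbed but a counterexample to the statement as written; note that the paper's own proof overlooks it by applying Theorem \ref{t31} to the family $(\omega_{m,0})_{m\in\ZZ}$, which is not uniformly separated. To make your argument close, you must either restrict $u_0$ to the orthogonal complement of $\operatorname{span}\set{1,e^{ix},e^{-ix}}$ (or quotient by the two-dimensional invariant kernel), or replace $\norm{u_0}^2$ on the left of \eqref{15} by the seminorm in which the three degenerate modes contribute only $\abs{A}^2$; with that modification the rest of your proof is correct.
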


We point out that Theorem \ref{t11} is not valid for horizontal segments instead of vertical ones, but we have positive results by observing the solutions on \emph{at least two} horizontal segments  (see the figure below), and by considering special initial data {belonging to $L^2_x(\TT\times\TT)$}. 

If we choose initial data that are orthogonal to the set of functions depending only on $x$

The result, in this context, is the following one:

\begin{theorem}\label{t12}
We consider the solutions of the equation \eqref{13}. 
Let $N\ge 2$ be an integer, $y_1, \ldots, y_N\in\TT$, and $I_1, \ldots, I_N$ bounded intervals  of positive lengths.
\begin{enumerate}[\upshape (i)]
\item If $N=2$ and $(y_1-y_2)/\pi$ is irrational,
then the linear map
\begin{equation*}
u_0\mapsto \bj{u_{\TT\times\set{y_1}\times I_1}, u_{\TT\times\set{y_2}\times I_2}}
\end{equation*}
is one-to-one on $L^2_x(\TT\times\TT)$.

\item If {$y_1, \ldots, y_{N+1}$ and $ \pi$ } are linearly independent over the field of rational numbers and  they belong to a real algebraic field  of degree $N+2$, then there exists a  constant $C_1>0$  such that the {solution of \eqref{11} with every initial value } $u_0\in L^2_x(\TT\times\TT)$ satisfy the following estimate:
\begin{equation*}
C_1\norm{u_0}_{H^{0,-1/N}(\TT\times\TT)}^2\le \sum_{j=1}^{N+1}\int_{I_j}\int_{\TT}
\norm{u(x,y_j,t)}^2\, dx\, dt.
\end{equation*}
\end{enumerate}
\end{theorem}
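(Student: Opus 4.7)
The plan is to combine a Fourier expansion in the spatial variables, a generalised Ingham inequality in time, and a simultaneous Diophantine lower bound on a $2\times 2$ Gram matrix built from the observation points $y_1,\ldots,y_{N+1}$.

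Expanding $u_0=\sum_{m,n\in\ZZ}c_{m,n}e^{i(mx+ny)}$ (with $c_{0,n}=0$ because $u_0\in L^2_x(\TT\times\TT)$), the plane-wave ansatz $e^{i(mx+ny-\omega t)}$ gives the dispersion relation $\omega_{m,n}=m(1-m^2-n^2)$, and a Plancherel identity in $x$ yields
\begin{equation*}
\int_{\TT}|u(x,y_j,t)|^2\,dx=2\pi\sum_{m\ne 0}|d_m(t,y_j)|^2,\qquad d_m(t,y_j):=\sum_{n\in\ZZ}c_{m,n}e^{iny_j}e^{-i\omega_{m,n}t}.
\end{equation*}
The symmetry $\omega_{m,n}=\omega_{m,-n}$ lets me regroup the modes $\pm n$ as
\begin{equation*}
d_m(t,y_j)=c_{m,0}e^{-i\omega_{m,0}t}+\sum_{k\ge 1}A_{m,k}(y_j)\,e^{-i\omega_{m,k}t},\qquad A_{m,k}(y_j):=c_{m,k}e^{iky_j}+c_{m,-k}e^{-iky_j}.
\end{equation*}

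Next I would apply an Ingham-type inequality in $t$, uniformly in $m$. For each fixed $m\ne 0$ the frequencies $\omega_{m,k}$, $k\ge 0$, are separated by $|m|(2k+1)\ge|m|$, and the gaps grow linearly in $k$, so the sequence has asymptotic density zero. A Beurling--Kahane generalisation of Ingham's inequality, with the uniformity in $m$ obtained via the change of variable $s=mt$ that reduces the frequencies to $1-m^2-k^2$ with $m$-independent gaps $2k+1$, then provides
\begin{equation*}
\int_{I_j}|d_m(t,y_j)|^2\,dt\ \ge\ c_j\Bigl(|c_{m,0}|^2+\sum_{k\ge 1}|A_{m,k}(y_j)|^2\Bigr),\qquad c_j=c(|I_j|)>0.
\end{equation*}
Summing in $j$ reduces the observability estimate to a lower bound on the Gram matrix of the rows $(e^{iky_j},e^{-iky_j})_{j=1}^{N+1}$, whose eigenvalues are $(N+1)\pm|S_k|$ with $S_k:=\sum_{j=1}^{N+1}e^{-2iky_j}$; explicitly, for $k\ge 1$,
\begin{equation*}
\sum_{j=1}^{N+1}|A_{m,k}(y_j)|^2\ \ge\ \bigl((N+1)-|S_k|\bigr)\bigl(|c_{m,k}|^2+|c_{m,-k}|^2\bigr),
\end{equation*}
while $\sum_j|A_{m,0}(y_j)|^2=(N+1)|c_{m,0}|^2$.

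For part (i), where $N=2$ and $(y_1-y_2)/\pi$ is irrational, one only needs $|S_k|<2$ for every $k\ge 1$, which is equivalent to $e^{2ik(y_1-y_2)}\ne 1$; the Gram matrix is then invertible, so the coefficients $c_{m,\pm k}$ are uniquely recoverable from the observation, giving injectivity of the observation map. For part (ii) the quantitative bound $(N+1)-|S_k|\ge c\,(1+k^2)^{-1/N}$ is required. Using $(N+1)^2-|S_k|^2=4\sum_{i<j}\sin^2(k(y_j-y_i))$, this reduces to the simultaneous Diophantine lower estimate $\max_{i<j}\|k(y_j-y_i)/\pi\|\ge c\,k^{-1/N}$. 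The hypothesis that $y_1,\ldots,y_{N+1},\pi$ are $\mathbb{Q}$-linearly independent with $y_1,\ldots,y_{N+1}$ in a real algebraic field of degree $N+2$ makes the $N$ differences $\beta_j:=(y_j-y_{N+1})/\pi$, together with $1$, a $\mathbb{Q}$-linearly independent family of algebraic numbers whose $\mathbb{Q}$-rank matches (up to one) the degree of the ambient field; a Schmidt/Liouville-type simultaneous approximation theorem tailored to this extremal situation then delivers the Dirichlet-sharp exponent $1/N$.

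The main technical obstacle is this Diophantine step: a naive one-at-a-time Liouville bound in a degree-$(N+2)$ field only yields the much weaker exponent $-(N+1)$, whereas the stated estimate requires the Dirichlet-sharp exponent $-1/N$. Extracting the latter is precisely why the degree assumption on the algebraic field is chosen minimal for the $\mathbb{Q}$-rank of the $y_j$'s, and why the argument must exploit all the differences $y_j-y_i$ simultaneously rather than separately. Once the Diophantine estimate is in hand, inserting it into the Gram-matrix bound and summing in $m$ and $k$ yields
\begin{equation*}
\sum_{j=1}^{N+1}\int_{I_j}\int_{\TT}|u(x,y_j,t)|^2\,dx\,dt\ \ge\ C_1\,\norm{u_0}_{H^{0,-1/N}(\TT\times\TT)}^2,
\end{equation*}
which is the desired estimate.
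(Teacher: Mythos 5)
Your proposal is correct and follows essentially the same route as the paper: Fourier expansion in $x$, pairing of the $\pm n$ modes via $\omega_{m,n}=\omega_{m,-n}$, a uniform-in-$m$ Ingham inequality in $t$, and a simultaneous Diophantine lower bound on $\max_{i<j}\dist\bj{k(y_i-y_j)/\pi,\ZZ}$; the only (cosmetic) difference is that you bound the full $(N+1)$-point Gram matrix by its smallest eigenvalue $(N+1)-|S_k|$, whereas the paper inverts the $2\times 2$ systems pairwise against $y_1$ and takes a maximum over $j=2,\ldots,N+1$. The Diophantine step you flag as the main obstacle is not actually a gap: the bound $\max_j\dist(k\theta_j,\ZZ)\ge Ck^{-1/N}$ for algebraic $\theta_j$ spanning, together with $1$, a real algebraic field of the stated degree is the classical Perron-type simultaneous approximation theorem, which the paper simply invokes as \cite[Theorem III on p.~79]{Cassels-1957} rather than proves.
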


\begin{remark}\label{r13}
In case $N=1$, Theorem \ref{t12} (ii) states that if $(y_1-y_2)/\pi$ is a quadratic irrational number, then
\begin{equation*}
C_1\norm{u_0}_{H^{0,-1}(\TT\times\TT)}^2\le \sum_{j=1}^2\int_{I_j}\int_{\TT}
\norm{u(x,y_j,t)}^2\, dx\, dt
\end{equation*}
with some constant $C_1>0.$ Our proof will show that a slightly weaker estimate holds under the much weaker assumption that $(y_1-y_2)/\pi$ is an arbitrary irrational algebraic number:
for each real number $s>1$ there exists a constant $C_s>0$ such that
\begin{equation*}
C_s\norm{u_0}_{H^{0,-s}(\TT\times\TT)}^2\le \sum_{j=1}^2\int_{I_j}\int_{\TT}
\norm{u(x,y_j,t)}^2\, dx\, dt.
\end{equation*}
\end{remark}

The next observability theorem states that the initial data may also be uniquely determined by the observation of the solutions on an arbitrarily small disc in $\TT\times\TT$ during an arbitrarily small time interval.

\begin{theorem}\label{t14}
For every fixed nonempty open set $U\subset\TT\times\TT\times\RR$, there exist two  constants $C_1, C_2>0$ such that the solutions of \eqref{13} with $u_0\in L^2_x(\TT\times\TT)$ satisfy the following estimates:
\begin{equation*}
C_1\norm{u_0}_{L^2(\TT\times\TT)}^2
\le\int_{U}\abs{u(x,y,t)}^2\ dx\ dy\ dt
\le C_2\norm{u_0}_{L^2(\TT\times\TT)}^2.
\end{equation*}
\end{theorem}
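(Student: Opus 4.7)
The upper bound is immediate from the $L^2(\TT\times\TT)$-conservation of the flow \eqref{13}: for any bounded open interval $J \subset \RR$ containing the time-projection of $U$, one has
\[
\int_U |u|^2\,dx\,dy\,dt \;\le\; |J|\,\|u_0\|_{L^2(\TT\times\TT)}^2,
\]
so $C_2 := |J|$ works (the inequality being vacuous if the time-projection of $U$ is unbounded).

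For the lower bound, since $U$ is open it contains a product box $B = I_x \times I_y \times I_t$, and it suffices to estimate $\|u_0\|_{L^2}^2$ by $\int_B |u|^2$. The $L^2_x$-assumption forces the Fourier expansion
\[
u(x,y,t) \;=\; \sum_{m \ne 0,\; n \in \ZZ} c_{m,n}\,e^{i(mx + ny + \omega_{m,n} t)},\qquad \omega_{m,n} = m(m^2 + n^2 - 1).
\]
Writing $u = \sum_{m \ne 0} v_m(y,t)\,e^{imx}$, a direct computation shows that $w_m := e^{-im(m^2-1)t}\,v_m$ satisfies the free Schr\"odinger equation $i w_{m,t} - m\,w_{m,yy} = 0$ on $\TT_y \times \RR$. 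Since the frequencies $\{m n^2\}_n$ have gaps $|m|(2n+1) \to \infty$, Jaffard's observability theorem for the $1$D Schr\"odinger equation on the circle applies uniformly in $m \ne 0$ and yields
\[
\int_{I_t}\!\int_{I_y} |v_m(y,t)|^2\,dy\,dt \;\ge\; C \sum_n |c_{m,n}|^2
\]
with $C = C(I_y, I_t) > 0$ independent of $m$. Summing in $m$ and applying Plancherel in $x$ \emph{on the full torus} yields observability from the enlarged cylinder:
\[
\|u_0\|_{L^2}^2 \;\le\; C'\!\int_{\TT \times I_y \times I_t} |u|^2\,dx\,dy\,dt.
\]

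The main obstacle is then to pass from $\TT \times I_y \times I_t$ to the proper sub-box $B$, since $\{e^{imx}\}_{m \ne 0}$ fails to be a Riesz sequence on $I_x \subsetneq \TT$. I would handle this by the standard compactness--uniqueness method, combining two ingredients: (i) a weak observability
\[
\|u_0\|_{L^2}^2 \;\le\; C\,\|u\|_{L^2(B)}^2 + C\,\|u_0\|_{H^{0,-s}}^2
\]
for some $s > 0$, obtained by decomposing $u$ in $x$ into low- and high-$m$ modes and applying a Remez-type inequality to the low-mode trigonometric polynomial; and (ii) the unique continuation property that $u \equiv 0$ on $B$ together with $u_0 \in L^2_x$ force $u_0 = 0$. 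Unique continuation follows from Holmgren's theorem for the analytic constant-coefficient equation \eqref{11}: its principal symbol $-i\xi(\xi^2+\eta^2)$ vanishes precisely on $\{\xi = 0\}$, so every hypersurface whose normal has a non-vanishing $x$-component is non-characteristic, and $\TT\times\TT\times\RR$ can be swept from $B$ by slanted hyperplanes such as $\{x = \alpha t + \beta\}$. The usual contradiction argument---a normalized minimizing sequence, weak $L^2$-limit, compact embedding $L^2 \hookrightarrow H^{0,-s}$, and unique continuation applied to the weak limit---then upgrades the weak observability to the desired lower bound in Theorem~\ref{t14}.
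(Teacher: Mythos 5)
Your upper bound and the intermediate observability from the enlarged cylinder $\TT\times I_y\times I_t$ are essentially sound: the conjugation of the $x$-Fourier modes to free Schr\"odinger evolutions is correct, and the uniformity in $m$ of the one-dimensional Ingham/Kahane constant can be justified because the gaps of $\set{(n,mn^2)}_n$ only improve as $|m|$ grows. The genuine gap is in the localization from $\TT\times I_y\times I_t$ to the box $B$. For the compactness--uniqueness scheme to close, the weak observability in your step (i) must have a remainder that is \emph{compact} relative to $L^2$. But the Remez inequality only recovers the low-$m$ modes: it gives $\int_B|P_Mu|^2\ge c(M)\norm{P_Mu_0}^2$ for the projection $P_M$ onto $|m|\le M$, and after the triangle inequality the remainder you are left with is
\begin{equation*}
\norm{(I-P_M)u_0}_{L^2}^2=\sum_{|m|>M}\sum_n\abs{c_{m,n}}^2,
\end{equation*}
an equivalent norm on the high-frequency subspace, not a compact perturbation; it is certainly not dominated by $\norm{u_0}_{H^{-s}}^2$ (the negative-order norm of high-frequency data is \emph{small} compared with its $L^2$ norm, which is the wrong direction). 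In other words, your argument silently requires a high-frequency observability estimate on the small box $B$ --- control of the cross terms $e^{i((m-m')x+(n-n')y+(\omega_{m,n}-\omega_{m',n'})t)}$ with $m\ne m'$ on $I_x\subsetneq\TT$ --- and that is precisely the heart of the theorem, not a routine remainder. (Two smaller points: with the paper's convention $\norm{\cdot}_{r,s}^2=\sum((1+m^2)^r+(1+n^2)^s)\abs{c_{m,n}}^2$, the $H^{0,-s}$ norm is equivalent to the $L^2$ norm, so it cannot serve as a compact remainder either; and the Holmgren sweep by the planes $\set{x=\alpha t+\beta}$ alone does not reach all of $\TT\times\TT\times\RR$, since these surfaces never move in $y$, though this part is repairable.)

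For contrast, the paper avoids compactness--uniqueness altogether: it treats the full three-dimensional frequency set $\Lambda=\set{(m,n,\omega_{m,n})}\subset\RR^3$ and proves (Proposition \ref{p51}, via the quadratic form $Q(k,\ell,m,n)$ and the hyperbola asymptotics of the near-resonance sets $B_{k,\ell,R}$) that $\Lambda$ is uniformly separated and sparse, so Kahane's theorem (Theorem \ref{t35}) yields the two-sided estimate directly on any nonempty bounded open subset of $U$. To repair your proof you would have to supply exactly this kind of quantitative lower bound on the differences $\lambda_{m+k,n+\ell}-\lambda_{m,n}$ with $k\ne 0$, at which point you would have reproduced the paper's argument.
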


For a better understanding of the observation regions considered in the previous theorems, we present three figures below:
\begin{figure}[h]
\centering
%\hspace{0.02\textwidth}%
\begin{minipage}[b]{0.25\textwidth}
\scalebox{1}{\begin{tikzpicture}
\draw[thick] (0,0)--(0,4)--(4,4)--(4,0)--(0,0);
\draw[ultra thick,red] (1,0)--(1,4);
\end{tikzpicture}}
\end{minipage}%
\hspace{0.06\textwidth}
\begin{minipage}[b]{0.25\textwidth}
\scalebox{1}{\begin{tikzpicture}
\draw[thick] (0,0)--(0,4)--(4,4)--(4,0)--(0,0);
\draw[ultra thick,red] (0,1)--(4,1);
\draw[ultra thick,red] (0,2)--(4,2);
\end{tikzpicture}}
\end{minipage}%
\hspace{0.05\textwidth}
\begin{minipage}[b]{0.25\textwidth}
\scalebox{1}{\begin{tikzpicture}
\draw[thick] (0,0)--(0,4)--(4,4)--(4,0)--(0,0);
\draw[fill=red] (2,2) circle (0.5);
\end{tikzpicture}}
\end{minipage}
\caption*{Theorems \ref{t11}, \ref{t12} and \ref{t14}}
\end{figure}
\begin{remark}
Theorems \ref{t11}, \ref{t12}, and \ref{t14}, improve the results obtained in \cite{LePa-22} for the system \eqref{11}. The improvements
are given to the observation regions. Indeed, as remarked above, the results in \cite{LePa-22} were proved by assuming that the controls act on the union of a narrow vertical and a narrow horizontal strip.
\end{remark}

Next, we consider the nonhomogeneous problem \eqref{14}.  We will prove the following exact controllability and rapid stabilization theorems:

\begin{theorem}\label{t15}
Fix $x_0\in\TT$ and $T>0$ arbitrarily.
Given any $z_0, z_T\in L^2(\TT\times\TT)$, there exists a  function $v\in L^2(\set{x_0}\times\TT\times (0,T))$ such that the solution of the control problem \eqref{14} satisfies the final condition
\begin{equation*}
z(T)=z_T\qtq{in}\TT\times\TT.
\end{equation*}
\end{theorem}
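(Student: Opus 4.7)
The plan is to apply the observability--controllability duality of Dolecki and Russell \cite{DolRus-1977} in the Hilbert Uniqueness Method set-up of Lions, using Theorem \ref{t11} as the key input on the vertical segment $\{x_0\}\times\TT$. Since the spatial operator $A:=\partial_x+\partial_x^3+\partial_x\partial_y^2$ is a sum of odd-order derivatives on a periodic domain, integration by parts yields $A^*=-A$ on $L^2(\TT\times\TT)$; hence \eqref{13} generates a unitary group $S(t)$ and its adjoint homogeneous equation coincides with \eqref{13} itself. By linearity, writing $z=S(\cdot)z_0+\widetilde z$, it suffices to reach an arbitrary target $\widetilde z_T\in L^2(\TT\times\TT)$ starting from $\widetilde z(0)=0$.

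With this reduction in hand, I would set up the standard HUM scheme. For each $\varphi_T\in L^2(\TT\times\TT)$, let $\varphi$ solve the homogeneous equation \eqref{13} with final datum $\varphi(T)=\varphi_T$; by unitarity $\varphi\in C([0,T];L^2(\TT\times\TT))$, and the upper bound in Theorem \ref{t11} provides the hidden regularity $\varphi|_{\{x_0\}\times\TT\times(0,T)}\in L^2$. Choose the control
\begin{equation*}
v:=\varphi|_{\{x_0\}\times\TT\times(0,T)},
\end{equation*}
interpreted as the distribution $v(y,t)\,\delta(x-x_0)$ on $\TT\times\TT\times(0,T)$, and define $\widetilde z$ as the solution of \eqref{14} with $\widetilde z(0)=0$ in the sense of transposition. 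A standard integration by parts exploiting $A^*=-A$ and the periodicity in $x$ and $y$ then yields the HUM identity
\begin{equation*}
\vv{\widetilde z(T),\varphi_T}_{L^2(\TT\times\TT)}
=\int_0^T\int_{\TT}\abs{\varphi(x_0,y,t)}^2\,dy\,dt.
\end{equation*}
Hence the operator $\Lambda:\varphi_T\mapsto\widetilde z(T)$ is symmetric and bounded on $L^2(\TT\times\TT)$, and the lower bound in Theorem \ref{t11} reads exactly $\vv{\Lambda\varphi_T,\varphi_T}\ge C_1\norm{\varphi_T}_{L^2}^2$. Lax--Milgram therefore gives that $\Lambda$ is an isomorphism of $L^2(\TT\times\TT)$, so taking $\varphi_T:=\Lambda^{-1}\widetilde z_T$ produces the required control $v\in L^2(\{x_0\}\times\TT\times(0,T))$.

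The main technical obstacle will be to make the transposition framework rigorous when $v$ is concentrated on the codimension-one surface $\{x_0\}\times\TT\times(0,T)$: one must define $\widetilde z$ via the duality formula $\vv{\widetilde z(t),\eta}:=\int_0^t\!\!\int_{\TT}v(y,s)\,\overline{\psi(x_0,y,s)}\,dy\,ds$ for every $\eta\in L^2(\TT\times\TT)$, where $\psi$ solves the homogeneous equation backward from $\eta$, and check that this gives $\widetilde z\in C([0,T];L^2(\TT\times\TT))$ with the integration-by-parts identity above. Both points are direct consequences of the hidden-trace upper bound in Theorem \ref{t11}, applied to the adjoint equation and combined with a density argument approximating $v$ by smooth sources.
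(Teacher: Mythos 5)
Your proposal is correct and follows essentially the same route as the paper: exact controllability from the vertical segment is deduced from the two-sided observability estimates of Theorem \ref{t11} via the Dolecki--Russell/HUM duality, with the upper bound supplying the hidden trace regularity needed to define solutions by transposition and the lower bound giving coercivity of the control-to-state map. The only difference is presentational: you carry out the HUM construction by hand (explicit operator $\Lambda$ and Lax--Milgram), whereas the paper invokes its abstract Theorems \ref{t62}--\ref{t63} after verifying hypotheses (H1)--(H4), in particular checking the resolvent-type admissibility bound (H2) for the trace operator $\mathcal{B}^*u=u|_{\set{x_0}\times\TT}$ by an explicit Fourier/Cauchy--Schwarz computation --- precisely the technical point you flag at the end.
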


\begin{theorem}\label{t16}
Fix $x_0\in\TT$ and a number $\omega>0$ arbitrarily.
There exists a linear map
\begin{equation*}
F_{\omega}:L^2(\TT\times\TT)\to L^2(\set{x_0}\times\TT)
\end{equation*}
and a constant $C>0$ such that
the  closed-loop problem
\begin{equation}\label{16}
\begin{cases}
&z_t+z_x+z_{xxx}+z_{xyy}=F_{\omega}z\qtq{in}\TT\times\TT\times (0,+\infty),\\
&z(0)=z_0\qtq{in}\TT\times\TT,
\end{cases}
\end{equation}
is well-posed in $L^2(\TT\times\TT)$, and its solutions satisfy the estimate
\begin{equation*}
\norm{z(t)}_{L^2(\TT\times\TT)} \leq C\norm{z_0}_{L^2(\TT\times\TT)} e^{-\omega t},
\end{equation*}
for all $z_0 \in L^2(\TT\times\TT)$ and $t \geq 0$.
\end{theorem}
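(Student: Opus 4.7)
The plan is to cast Problem $\mathcal{C}$ in an abstract semigroup setting and apply the rapid stabilization procedure of Komornik \cite{Kom-1997}, using the observability estimate from Theorem \ref{t11} as the essential input. Set $Au := -u_x - u_{xxx} - u_{xyy}$ on $L^2(\TT\times\TT)$ with its natural periodic domain; $A$ is skew-adjoint and generates a unitary group $S(t)$. Define the observation operator $Cu := u(x_0,\cdot)$, taking values in $L^2(\TT)$, and let $B := C^*$ be the corresponding (unbounded) control operator. In this language, \eqref{16} reads $z_t = Az + BF_\omega z$, with the sought feedback $F_\omega : L^2(\TT\times\TT) \to L^2(\TT)$.

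Theorem \ref{t11} provides both the admissibility of $C$ (the upper bound in \eqref{15}) and exact observability in arbitrarily small time (the lower bound). First I would derive from this a weighted observability estimate: for any fixed $T_\omega > 0$, using $e^{-2\omega t}\ge e^{-2\omega T_\omega}$ on $(0,T_\omega)$ together with \eqref{15},
\begin{equation*}
\int_0^\infty e^{-2\omega t}\norm{CS(t)u_0}_{L^2(\TT)}^2\, dt\ \ge\ c_\omega \norm{u_0}_{L^2(\TT\times\TT)}^2
\end{equation*}
with $c_\omega > 0$. Combined with admissibility, this implies that the weighted Gramian
\begin{equation*}
\Lambda_\omega u_0\ :=\ \int_0^\infty e^{-2\omega t}\, S(t)^* B\, C\, S(t) u_0\, dt
\end{equation*}
is a bounded, self-adjoint, positive-definite isomorphism of $L^2(\TT\times\TT)$.

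I would then define the feedback $F_\omega := -B^* \Lambda_\omega^{-1} = -C\Lambda_\omega^{-1}$, which is bounded from $L^2(\TT\times\TT)$ into $L^2(\TT)$. Well-posedness of the closed-loop problem \eqref{16} in $L^2(\TT\times\TT)$ follows from the general framework of \cite{Kom-1997} for admissible observation/control systems. The decay rate is obtained via the Lyapunov functional $V(t) := \vv{\Lambda_\omega^{-1}z(t),\,z(t)}_{L^2}$: differentiating $\Lambda_\omega$ under the integral yields the algebraic identity $A^*\Lambda_\omega + \Lambda_\omega A - 2\omega\Lambda_\omega = -BB^*$, and combining this with the skew-adjointness of $A$ a short computation along trajectories gives $\dot V(t)\le -2\omega V(t)$. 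This produces $V(t)\le e^{-2\omega t}V(0)$, and therefore the required exponential estimate $\norm{z(t)}_{L^2(\TT\times\TT)}\le C\norm{z_0}_{L^2(\TT\times\TT)}e^{-\omega t}$.

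The main obstacle is the unboundedness of $B$: since the feedback is supported on the one-dimensional curve $\{x_0\}\times\TT$ inside $\TT\times\TT$, $B$ does not act boundedly from $L^2(\TT)$ into $L^2(\TT\times\TT)$, so both the Gramian identity and the closed-loop equation must be read in the weak sense adapted to admissible systems. All the required hidden regularity and solvability, however, reduce precisely to the two bounds in \eqref{15}, which Theorem \ref{t11} delivers; beyond this point the argument is a direct import of the abstract construction in \cite{Kom-1997}.
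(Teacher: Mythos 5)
Your proposal is correct and follows essentially the same route as the paper: both reduce Theorem \ref{t16} to the abstract rapid stabilization theorem of \cite{Kom-1997} (Theorem \ref{t63} in the paper), with the two-sided estimate \eqref{15} of Theorem \ref{t11} supplying exactly the admissibility (upper bound) and observability (lower bound) hypotheses. The only differences are cosmetic: you unpack the weighted-Gramian/Riccati construction that the paper invokes as a black box, while the paper additionally verifies explicitly (via a Fourier expansion and Cauchy--Schwarz) the relative-boundedness hypothesis (H2), namely $\norm{u(x_0,\cdot)}_{L^2(\TT)}\le\norm{(I+\mathcal{A}^*)u}_{L^2(\TT\times\TT)}$, which in your write-up is absorbed into the admissibility of the trace observation.
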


\begin{theorem}\label{t17}
Fix a nonempty open set $\Omega\subset \TT\times\TT$ and  $T>0$ arbitrarily.
Given any $z_0, z_T\in \bj{L^2_x(\TT\times\TT)}'$, there exists a  function $v\in L^2(\Omega\times (0,T))$ such that the solution of the control problem \eqref{14} satisfies the final condition
\begin{equation*}
z(T)=z_T\qtq{in}\TT\times\TT.
\end{equation*}
\end{theorem}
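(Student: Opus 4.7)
The plan is to derive Theorem~\ref{t17} from the internal observability estimate of Theorem~\ref{t14} by duality, via the Dolecki--Russell observation-control principle \cite{DolRus-1977} combined with the Hilbert Uniqueness Method of Lions \cite{Lions-1988}. The key input is that, for the choice $U=\Omega\times(0,T)\subset\TT\times\TT\times\RR$, Theorem~\ref{t14} furnishes two constants $C_1,C_2>0$ such that
$$
C_1\norm{u_0}_{L^2(\TT\times\TT)}^2\le\int_U\abs{u(x,y,t)}^2\,dx\,dy\,dt\le C_2\norm{u_0}_{L^2(\TT\times\TT)}^2
$$
for every solution of \eqref{13} with $u_0\in L^2_x(\TT\times\TT)$.

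\textbf{Duality argument.} I introduce the continuous observation map
$$
\Phi:L^2_x(\TT\times\TT)\to L^2(\Omega\times(0,T)),\qquad u_0\mapsto u\big|_{\Omega\times(0,T)}.
$$
The lower bound above means that $\Phi$ is a topological embedding, so by the closed-range theorem its Banach-space adjoint $\Phi^{*}:L^2(\Omega\times(0,T))\to\bj{L^2_x(\TT\times\TT)}'$ is surjective. A standard integration by parts---using that $\partial_x+\partial_x^3+\partial_x\partial_y^2$ is formally skew-adjoint on $\TT\times\TT$ because each of its terms contains an odd number of derivatives and the domain has no boundary---shows that for $v\in L^2(\Omega\times(0,T))$ the image $\Phi^{*}v$ coincides, up to the natural time reversal of the equation, with the final state $z(T)\in(L^2_x)'$ of the transposition solution of \eqref{14} starting from $z_0=0$ and driven by $v$. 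Surjectivity of $\Phi^{*}$ therefore delivers, for every target $\tilde z_T\in(L^2_x)'$, a control $v\in L^2(\Omega\times(0,T))$ steering $0$ to $\tilde z_T$.

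\textbf{General initial data.} The free evolution semigroup $S(t)$ associated with \eqref{13} is unitary on $L^2(\TT\times\TT)$ and leaves the orthogonal decomposition of $L^2$ into $L^2_x$ and functions depending only on $y$ invariant (the modes $(0,n)$ are stationary under the dispersion relation $\omega(m,n)=m(1-m^2-n^2)$). Consequently $S(t)$ restricts to a unitary group on $L^2_x$ and extends by transposition to a strongly continuous unitary group on $(L^2_x)'$. For arbitrary $z_0\in(L^2_x)'$ it then suffices to apply the previous step to the shifted target $\tilde z_T:=z_T-S(T)z_0\in(L^2_x)'$; by linearity, the control thereby obtained steers $z_0$ to $z_T$ in time $T$.

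\textbf{Main obstacle.} The delicate technical point is making the transposition setup rigorous: one has to define solutions of \eqref{14} with initial data in the dual space $\bj{L^2_x(\TT\times\TT)}'$ and control $v\in L^2(\Omega\times(0,T))$, and to interpret the terminal equality $z(T)=z_T$ in that same space. This is precisely enabled by the upper bound in Theorem~\ref{t14}, a hidden trace regularity estimate which guarantees that the bilinear pairing $(v,u_0)\mapsto\int_U v\,\bar u\,dx\,dy\,dt$ is bounded on $L^2(\Omega\times(0,T))\times L^2_x(\TT\times\TT)$ and hence unambiguously defines $z(T)\in(L^2_x)'$; the lower bound in the same theorem is then what converts injectivity of the observation into surjectivity of the control-to-state map, closing the HUM loop.
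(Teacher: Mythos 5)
Your proposal is correct and follows essentially the same route as the paper: both deduce Theorem \ref{t17} from the two-sided observability estimate of Theorem \ref{t14} with $U=\Omega\times(0,T)$ via the observability--controllability duality, the paper by verifying hypotheses (H1)--(H4) and invoking the abstract Dolecki--Russell result (Theorem \ref{t62}), you by unpacking that same duality directly through the closed-range theorem applied to the observation map and its adjoint. Your remarks on the role of the upper bound (hidden regularity for the transposition solution) and on reducing general initial data to $z_0=0$ by the free unitary group correspond exactly to what the abstract framework of Section \ref{s6} encapsulates.
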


\begin{theorem}\label{t18}
Fix a nonempty open set
$\Omega\subset\TT\times\TT$ and a number $\omega>0$ arbitrarily.
There exists a linear map
\begin{equation*}
F_{\omega}:\bj{L^2_x(\TT\times\TT)}'\to L^2(\Omega)
\end{equation*}
and a constant $C>0$ such that
the  closed-loop problem \eqref{16} is well-posed in $\bj{L^2_x(\TT\times\TT)}'$, and its solutions satisfy the estimate
\begin{equation*}
\norm{z(t)}_{L^2(\TT\times\TT)} \leq C\norm{z_0}_{L^2(\TT\times\TT)} e^{-\omega t},
\end{equation*}
for all $z_0 \in \bj{L^2_x(\TT\times\TT)}'$ and $t \geq 0$.
\end{theorem}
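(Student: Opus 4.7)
The plan follows Komornik's variant of the Hilbert Uniqueness Method (\cite{Kom-1997}), in close parallel with the proof of Theorem~\ref{t16}, but with Theorem~\ref{t14} supplying the observability, $L^2_x(\TT\times\TT)$ playing the role of $L^2(\TT\times\TT)$, and the state living in the dual $\bj{L^2_x(\TT\times\TT)}'$, consistently with Theorem~\ref{t17}.

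First I would derive a weighted observability inequality on $L^2_x(\TT\times\TT)$. Applying Theorem~\ref{t14} to the cylinder $U:=\Omega\times(0,T)$ with $T>0$ fixed yields
\begin{equation*}
C_1\norm{u_0}_{L^2}^2 \le \int_0^T\int_\Omega\abs{u(x,y,t)}^2\,dx\,dy\,dt \le C_2\norm{u_0}_{L^2}^2
\end{equation*}
for every $u_0\in L^2_x(\TT\times\TT)$, where $u$ solves \eqref{13}. Because the linear ZK group is unitary on $L^2(\TT\times\TT)$ and preserves $L^2_x(\TT\times\TT)$, the norm $\norm{u(t)}_{L^2}$ is conserved. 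Splitting $[0,\infty)$ into consecutive intervals of length $T$ and summing a geometric series delivers, for each $\omega>0$, the weighted estimate
\begin{equation*}
c_\omega\norm{u_0}_{L^2}^2 \le \int_0^\infty e^{-2\omega t}\int_\Omega\abs{u(x,y,t)}^2\,dx\,dy\,dt \le C_\omega\norm{u_0}_{L^2}^2.
\end{equation*}
Hence the bilinear form
\begin{equation*}
a_\omega(u_0,v_0) := \int_0^\infty e^{-2\omega t}\int_\Omega u\,\overline{v}\,dx\,dy\,dt
\end{equation*}
is continuous, symmetric, and coercive on $L^2_x(\TT\times\TT)$, and the Lax--Milgram theorem furnishes a topological isomorphism $\Lambda_\omega:L^2_x(\TT\times\TT)\to\bj{L^2_x(\TT\times\TT)}'$ characterized by $\vv{\Lambda_\omega u_0,v_0}=a_\omega(u_0,v_0)$.

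Writing $R:L^2_x(\TT\times\TT)\to L^2(\Omega)$ for the restriction to $\Omega$, I would define the feedback by
\begin{equation*}
F_\omega z := -R\,\Lambda_\omega^{-1}z,\qquad z\in\bj{L^2_x(\TT\times\TT)}'.
\end{equation*}
Boundedness of $R$ and of $\Lambda_\omega^{-1}$ makes $F_\omega$ bounded from $\bj{L^2_x(\TT\times\TT)}'$ into $L^2(\Omega)$, so \eqref{16} is a bounded perturbation of the dual-extended ZK group and is therefore well-posed on $\bj{L^2_x(\TT\times\TT)}'$. For the decay I would use the pointwise identity
\begin{equation*}
\vv{\Lambda_\omega Au_0,u_0} = \omega\vv{\Lambda_\omega u_0,u_0} - \tfrac{1}{2}\norm{Ru_0}_{L^2(\Omega)}^2,
\end{equation*}
obtained by differentiating the time-shift covariance of $a_\omega$, where $A=-\partial_x-\partial_x^3-\partial_x\partial_y^2$ is the (skew-adjoint) ZK generator. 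Combining this with the skew-adjointness of $A$ under the duality pairing, a direct calculation along smooth closed-loop trajectories gives
\begin{equation*}
\frac{d}{dt}\vv{z(t),\Lambda_\omega^{-1}z(t)} + 2\omega\vv{z(t),\Lambda_\omega^{-1}z(t)} = -\norm{R\Lambda_\omega^{-1}z(t)}_{L^2(\Omega)}^2 \le 0.
\end{equation*}
Integrating and invoking the equivalence $\vv{z,\Lambda_\omega^{-1}z}\simeq\norm{z}_{(L^2_x)'}^2$ (from the continuity and coercivity of $a_\omega$) yields the claimed exponential estimate.

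The main obstacle I anticipate is to make the Lyapunov computation rigorous at the low regularity $\bj{L^2_x(\TT\times\TT)}'$, where $A$ and $\Lambda_\omega^{-1}$ live on different scales and the identities above are initially only formal. The remedy, standard in this framework, is to verify the energy identity first on the dense subspace of trigonometric polynomials in $L^2_x(\TT\times\TT)$, where every operator in sight acts classically, and then pass to the limit using the uniform equivalence constants of $\vv{\cdot,\Lambda_\omega^{-1}\cdot}\simeq\norm{\cdot}_{(L^2_x)'}^2$ to close the argument.
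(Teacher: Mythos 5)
Your argument is correct, and it rests on the same essential input as the paper's proof -- the two-sided observability estimate of Theorem~\ref{t14} applied to the cylinder $U=\Omega\times(0,T)$ -- but you reach the conclusion by a different route. The paper does not construct the feedback at all: it verifies the hypotheses (H1)--(H4) of the abstract framework of Section~\ref{s6} (with $H'=L^2_x(\TT\times\TT)$, $G'=L^2(\Omega)$, $\mathcal{B}^*u=u|_{\Omega}$, hypothesis (H2) being checked by a short Fourier computation and (H3)--(H4) being exactly Theorem~\ref{t14}) and then invokes Theorem~\ref{t63}, quoted from \cite{Kom-1997}, as a black box. You instead unpack that black box in the concrete setting: you build the weighted Gramian $\Lambda_\omega$ on the infinite horizon with weight $e^{-2\omega t}$, obtain its coercivity from Theorem~\ref{t14} together with conservation of the $L^2$ norm (note that the lower bound already follows by restricting the integral to $[0,T]$, so the geometric-series step is not needed), define $F_\omega=-R\Lambda_\omega^{-1}$, and close with the Lyapunov identity for $V(z)=\vv{z,\Lambda_\omega^{-1}z}$; your time-shift differentiation identity and the resulting inequality $V'+2\omega V\le 0$ check out, modulo taking real parts. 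This infinite-horizon variant is legitimate here precisely because the control operator is bounded (restriction to an open set), so your construction is closer in spirit to \cite{Slemrod-1974} than to the compactly supported weight used in \cite{Kom-1997} for unbounded $\mathcal{B}$; what the paper's route buys is uniformity (the same abstract theorem also handles Theorem~\ref{t16}, where $\mathcal{B}^*$ is a trace on a segment and only the weaker hypothesis (H2) holds), while your route buys a self-contained, explicit feedback law. Your closing remark about justifying the formal computation on the dense subspace of trigonometric polynomials and passing to the limit is exactly the right way to make the Lyapunov argument rigorous at the regularity of $\bj{L^2_x(\TT\times\TT)}'$.
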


We finish this subsection by remarking that we could also obtain exact controllability and rapid stabilization theorems corresponding to Theorem \ref{t12}, but the appropriate Hilbert spaces are not Sobolev spaces. The treatment is similar to that of the wave equation with the Neumann boundary condition as in \cite{Lions-1988} and \cite{Kom-1997}, respectively.

\subsection{Structure of the article}

Our work is composed of nine sections, including the Introduction. In Section \ref{s2} we briefly discuss the well-posedness of the homogeneous system \eqref{14}, and the eigenfunction expansion of its solutions. The special form of its spectrum will play a crucial role in all subsequent results. In Section \ref{s3} we give a short review of some classical theorems of Ingham, Beurling, and Kahane. We introduce the notion of a \emph{sparse set}; this will simplify our applications to observability. The most important mathematical proofs of the present paper are given in Sections \ref{s4} and \ref{s5}, where we prove Theorems \ref{t11}, \ref{t12}, and \ref{t14}.  Section \ref{s6} is devoted to recalling two theorems of the kind
\begin{equation*}
\text{observability}\Longleftrightarrow\text{controllability}
\qtq{and} \text{observability}\Longrightarrow\text{rapid stabilizability}
\end{equation*}
in an abstract framework. In particular, we emphasize that the well-posedness of a linear control system may easily be derived from the observability of the dual system. Applying the abstract results of  Section \ref{s6}, we deduce Theorems \ref{t15}--\ref{t18} from the results of Sections \ref{s4} and \ref{s5}, in the Section \ref{s7}. In our final section \ref{s8}, we indicate further possible applications of the methods developed in this manuscript, and we formulate some related open problems.

\section{Well-posedness and eigenfunction expansions}\label{s2}

We establish the well-posedness of the homogeneous system \eqref{13}. We have the following result:

\begin{proposition}\label{p21}
Let $s\in\RR$.
Given any initial datum $u_0\in H^s(\TT\times\TT)$, {the problem \eqref{13} } has a unique solution
\begin{equation*}
u\in C(\RR;H^s(\TT\times\TT))\cap C^1(\RR;H^{s-3}(\TT\times\TT)).
\end{equation*}
Furthermore, it is represented by the series
\begin{equation}\label{21}
u(x,y,t)=\sum_{m,n\in\ZZ}c_{m, n}e^{i(mx+ny+\omega_{m, n}t)},\quad \omega_{m, n}:=m^3+mn^2-m
\end{equation}
with suitable complex coefficients $c_{m, n}$, depending on $u_0$.

Moreover, the map $u_{S}\mapsto u_0$ is an isometric automorphism of $L^2(\TT\times\TT)$ for every $S\in\RR$, and
\begin{equation*}
\norm{u(S)}_{L^2(\TT\times\TT)}^2
=4\pi^2\sum_{m,n\in\ZZ}\abs{c_{m, n}}^2.
\end{equation*}
\end{proposition}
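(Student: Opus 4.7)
The plan is to diagonalize the equation in the Fourier basis $\{e^{i(mx+ny)}\}_{m,n\in\ZZ}$, which is a complete orthogonal system in $L^2(\TT\times\TT)$ with $\norm{e^{i(mx+ny)}}_{L^2}^2=4\pi^2$. Given $u_0\in H^s$ with Fourier expansion $u_0=\sum_{m,n}c_{m,n}e^{i(mx+ny)}$, I look for a solution of the form $u(x,y,t)=\sum_{m,n}c_{m,n}(t)e^{i(mx+ny)}$ with $c_{m,n}(0)=c_{m,n}$. Since $\partial_x,\partial_y$ act diagonally on this basis, substitution into \eqref{13} yields the decoupled ODEs
\begin{equation*}
c_{m,n}'(t)=-i\bigl(m-m^3-mn^2\bigr)c_{m,n}(t)=i\omega_{m,n}c_{m,n}(t),
\end{equation*}
whose solutions are $c_{m,n}(t)=c_{m,n}e^{i\omega_{m,n}t}$, giving exactly the representation \eqref{21}.

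Next I verify that this formal series defines a solution in the required regularity class. The crucial fact is that $\abs{e^{i\omega_{m,n}t}}=1$, so the modulus of each Fourier coefficient is preserved in time. Consequently the anisotropic norm $\norm{u(t)}_{r,s}$ equals $\norm{u_0}_{r,s}$ for every $t$, and continuity $u\in C(\RR;H^s(\TT\times\TT))$ follows by dominated convergence on the Fourier side (each term is continuous in $t$ and the $\ell^2$ tail is small uniformly in $t$). For $C^1$ regularity in $H^{s-3}$, term-by-term differentiation gives $u_t$ the Fourier coefficients $i\omega_{m,n}c_{m,n}e^{i\omega_{m,n}t}$; since $|\omega_{m,n}|\lesssim (1+m^2+n^2)^{3/2}$, the resulting series converges in $H^{s-3}$, matching precisely the loss of three derivatives inherent in the equation. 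Uniqueness is automatic: any solution with $u_0=0$ must have every Fourier coefficient identically zero by the diagonalization above, so it vanishes.

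Finally, the isometry statement. For every $S\in\RR$ the map $u_0\mapsto u(S)$ sends the Fourier sequence $(c_{m,n})$ to $(c_{m,n}e^{i\omega_{m,n}S})$; its inverse is the map $(d_{m,n})\mapsto (d_{m,n}e^{-i\omega_{m,n}S})$, which by the same argument applied to the time-reversed equation is well defined on $L^2(\TT\times\TT)$. Parseval's identity then gives
\begin{equation*}
\norm{u(S)}_{L^2(\TT\times\TT)}^2=4\pi^2\sum_{m,n\in\ZZ}\abs{c_{m,n}e^{i\omega_{m,n}S}}^2=4\pi^2\sum_{m,n\in\ZZ}\abs{c_{m,n}}^2=\norm{u_0}_{L^2(\TT\times\TT)}^2,
\end{equation*}
so the map is an isometric automorphism.

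Since the eigenvalue problem on the flat torus separates cleanly in the Fourier basis, there is no genuine obstacle here; the argument is essentially bookkeeping. The single delicate point is the verification that the series for $u_t$ converges in $H^{s-3}$ and represents the classical time derivative (rather than just a distributional one), which is exactly where the explicit polynomial growth $|\omega_{m,n}|\lesssim (1+m^2+n^2)^{3/2}$ enters. All subsequent observability arguments will rely only on this diagonalized form and on the explicit shape of the spectrum $\{\omega_{m,n}\}$.
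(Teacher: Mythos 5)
Your proof is correct and follows essentially the same route as the paper: diagonalization in the Fourier eigenbasis of the operator $\mathcal{A}u=-(u_x+u_{xxx}+u_{xyy})$, yielding the explicit representation \eqref{21}, with the isometry coming from Parseval and surjectivity from the fact that multiplication by the unimodular factors $e^{i\omega_{m,n}S}$ preserves $\ell^2$. The only difference is that the paper delegates the well-posedness details to an abstract semigroup theorem (citing \cite{KomLor-2005}), whereas you verify the convergence of the series and the $C^1$ regularity in $H^{s-3}$ by hand.
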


\begin{proof}
The proof follows a standard application of linear semigroup theory.  The present problem is even simpler because there is an orthonormal base formed of eigenfunctions of the underlying operator
\begin{equation*}
\mathcal{A}u:=-(u_x+u_{xxx}+u_{xyy}),\quad u\in H^{s+3}(\TT\times\TT).
\end{equation*}

The first part of the theorem readily follows for example by applying \cite[Theorem 3.1]{KomLor-2005} for this operator in the Hilbert space $\mathcal{H}=H^s(\TT\times\TT)$.

The isometry property is a consequence of Parseval's formula:
\begin{align*}
\norm{u(S)}_{L^2(\TT\times\TT)}^2
&=\int_{\TT\times\TT}\abs{\sum_{m,n\in\ZZ}c_{m, n}e^{i(mx+ny+\omega_{m, n}S)}}^2\, dx\, dy\\
&=4\pi^2\sum_{m,n\in\ZZ}\abs{c_{m, n}e^{i\omega_{m, n}S}}^2
=4\pi^2\sum_{m,n\in\ZZ}\abs{c_{m, n}}^2
\end{align*}
for every $S\in\RR$.

Finally, each map $u_{S}\mapsto u_0$ {is onto } because if $(c_{m,n})_{(m,n)\in\ZZ^2}$ runs over $\ell^2(\ZZ^2)$, then $(c_{m,n}e^{i\omega_{m,n}S})_{(m,n)\in\ZZ^2}$ also runs over $\ell^2(\ZZ^2)$.
\end{proof}

\section{A short review of Ingham type theorems}\label{s3}

First, we recall a theorem of Ingham \cite{Ingham-1936} and its strengthened form given by Beurling \cite{Beurling} and Kahane \cite{Kahane-1962}.

\begin{theorem}\label{t31}
Let $(\lambda_k)_{k\in K}$ be a uniformly separated family of real numbers, i.e., satisfying the following \emph{gap condition}:
\begin{equation*}
\gamma:=\inf_{\substack{m,n\in K\\ k\ne n}}\abs{\lambda_k-\lambda_n}>0.
\end{equation*}
Then for each bounded interval $I$ of length $\abs{I}>\frac{2\pi}{\gamma}$ there exist two  constants $C_1, C_2>0$ such that every square summable family $(c_k)_{k\in K}$ of complex numbers satisfies the following inequalities:
\begin{equation}\label{31}
C_1\sum_{k\in K}\abs{x_k}^2
\le\int_I\abs{\sum_{k\in K}c_ke^{i\lambda_kt}}^2\, dt
\le C_2\sum_{k\in K}\abs{x_k}^2.
\end{equation}

Moreover, the conclusion holds (with different constants) under the weaker condition
\begin{equation*}
\abs{I}>\frac{2\pi}{{\gamma(K\setminus L)}}\qtq{with}
{\gamma(K\setminus L)}:=\inf_{\substack{m,n\in K\setminus L\\ k\ne n}}\abs{\lambda_k-\lambda_n},
\end{equation*}
{where $L$ is any finite subset of $K$.}
\end{theorem}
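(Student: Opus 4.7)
The plan is to follow the classical kernel method of Ingham, with a perturbation argument for the Beurling--Kahane refinement. First I would translate $I$ to be symmetric about the origin, writing $I=(-T,T)$ with $2T>2\pi/\gamma$; the change of variables multiplies each $c_k$ by a unimodular constant, so neither side of \eqref{31} is affected. Setting $f(t):=\sum_{k\in K}c_ke^{i\lambda_kt}$, the entire analysis rests on the identity
$$
\int_{\RR} \Phi(t)\,|f(t)|^2\, dt
= \widehat{\Phi}(0)\sum_{k\in K}|c_k|^2 + \sum_{\substack{m,n\in K\\ m\ne n}} c_m\,\overline{c_n}\,\widehat{\Phi}(\lambda_n-\lambda_m),
$$
valid for any $\Phi\in L^1(\RR)$ of sufficient decay, which reduces both inequalities to a clever choice of $\Phi$.

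For the lower bound I would use the classical Ingham kernel $\Phi_-(t)=\cos(\pi t/(2T))\,\mathbf{1}_{(-T,T)}(t)$; a direct computation gives $\widehat{\Phi_-}(\lambda)=4\pi T\cos(\lambda T)/(\pi^2-4T^2\lambda^2)$, which equals $4T/\pi$ at $\lambda=0$ and decays like $1/\lambda^2$. Thanks to the hypothesis $2T>2\pi/\gamma$, the denominator $\pi^2-4T^2(\lambda_n-\lambda_m)^2$ stays bounded away from zero for $m\ne n$, and a Schur-type estimate using the gap $\gamma$ bounds the off-diagonal sum by $\eta\sum|c_k|^2$ with $\eta<\widehat{\Phi_-}(0)$; since $\Phi_-\le \mathbf{1}_{(-T,T)}$ pointwise, this yields the left-hand inequality in \eqref{31}. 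For the upper bound I would instead take a nonnegative majorant of $\mathbf{1}_{(-T,T)}$ with nonnegative, summable Fourier transform, for example a triangle function on a slightly larger interval whose Fourier transform is a nonnegative $\mathrm{sinc}^2$; the diagonal term plus the absolutely convergent off-diagonal sum then deliver the right-hand inequality.

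For the Beurling--Kahane refinement I would decompose $f=f_L+f_{K\setminus L}$, where $f_L$ gathers the finitely many frequencies indexed by $L$ and $f_{K\setminus L}$ satisfies the better gap condition $\gamma(K\setminus L)$. On any interval of length strictly greater than $2\pi/\gamma(K\setminus L)$, the part already proved applies to $f_{K\setminus L}$. The upper bound transfers to $f$ by the triangle inequality together with the trivial finite-dimensional estimate for $f_L$. The lower bound passes to $f$ by a standard compactness--contradiction argument: if it failed, one could extract a sequence $(f^{(j)})$ with $\sum|c_k^{(j)}|^2=1$ and $\|f^{(j)}\|_{L^2(I)}\to 0$, take weak limits of the finitely many $L$-coefficients, use the strong Ingham bound on the tail to force the tail coefficients to vanish, and exploit the linear independence of $\{e^{i\lambda_k t}\}_{k\in L}$ on $I$ to reach a contradiction.

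The main obstacle is the sharp Schur bound on the off-diagonal sum in the lower-bound kernel analysis: balancing the $1/\lambda^2$ decay of $\widehat{\Phi_-}$ against the density of the frequencies while keeping the total $\eta$ strictly below $\widehat{\Phi_-}(0)$ is precisely what forces the threshold $|I|>2\pi/\gamma$ and constitutes the technical heart of Ingham's argument.
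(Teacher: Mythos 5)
The paper does not actually prove Theorem \ref{t31}: it is recalled as a classical result with references to Ingham, Beurling and Kahane, so there is no in-paper argument to compare yours against. Your first part is the standard and correct kernel proof: the identity $\int_{\RR}\Phi|f|^2\,dt=\sum_{m,n}c_m\overline{c_n}\widehat{\Phi}(\lambda_n-\lambda_m)$, the kernel $\cos(\pi t/(2T))\mathbf{1}_{(-T,T)}$ whose transform $4\pi T\cos(\lambda T)/(\pi^2-4T^2\lambda^2)$ has denominator bounded away from zero once $T\gamma>\pi$, the Schur bound $\sum_{j\ge1}8\pi T/(4T^2j^2\gamma^2-\pi^2)<4T/\pi$ for the off-diagonal part, and a Fej\'er-type majorant for the upper bound. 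That is exactly Ingham's argument and I see no problem with it.

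The gap is in the Beurling--Kahane refinement. In your compactness argument you propose to ``use the strong Ingham bound on the tail to force the tail coefficients to vanish, and exploit the linear independence of $\{e^{i\lambda_kt}\}_{k\in L}$.'' This is circular: from $\|f^{(j)}\|_{L^2(I)}\to0$ and $f_L^{(j)}\to f_L^\infty$ you only get $f^{(j)}_{K\setminus L}\to -f_L^\infty$ in $L^2(I)$, so the Ingham bound on the tail gives convergence of the tail coefficients to a limit that vanishes \emph{only if} $f_L^\infty=0$ --- which is precisely what you still have to prove. What you end up needing is not the linear independence of the finitely many functions $e^{i\lambda_kt}$, $k\in L$, on $I$ (which is trivial), but the much stronger statement that a nontrivial finite combination of them cannot lie in the \emph{closed} $L^2(I)$-span of the infinitely many tail exponentials on an interval of length only $>2\pi/\gamma(K\setminus L)$. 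That statement is the actual content of the refinement and requires a separate idea; the standard one is Haraux's induction, removing one frequency $\lambda_{k_0}$ at a time via the difference operator $f\mapsto \frac1\epsilon\int_0^\epsilon\bigl(f(t)-e^{-i\lambda_{k_0}s}f(t+s)\bigr)\,ds$, which annihilates the $k_0$-mode while multiplying the remaining coefficients by factors uniformly bounded below thanks to the gap, and which shortens the observation interval only by the arbitrarily small amount $\epsilon$. With that lemma in hand your decomposition $f=f_L+f_{K\setminus L}$ and the rest of your outline go through; without it, the contradiction you aim for cannot be reached.
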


\begin{example}
The family $(\lambda_k:=k^3)_{k\in\ZZ}$ satisfies the assumptions of the theorem with $\gamma=1$.
Furthermore, if $L_m=\set{k\in\ZZ\ :\ |k|<m}$ for some  integer $m\ge 1$, then
\begin{equation*}
{\gamma(K\setminus L_m)=3m(m-1)+1 }
\end{equation*}  
by a simple verification.
Since $\gamma(K\setminus L_m)\to +\infty$ as $m\to+\infty$, the inequalities \eqref{31} (with $K=\ZZ$) hold for  bounded intervals of arbitrary positive length.
\end{example}

Theorem \ref{t31} was extended to higher dimensions by Kahane \cite{Kahane-1962}.
We only state here a special case of his results that will be sufficient for our applications. To do that, let us introduce some notations.

\begin{definition}
Consider a set $\Lambda=\set{\lambda_k\ :\ k\in K}$ in a normed space.

$\Lambda$ is \emph{uniformly separated} if
\begin{equation*}
\gamma(\Lambda):=\inf_{\substack{m,n\in K\\ k\ne n}}\norm{\lambda_k-\lambda_n}>0.
\end{equation*}

$\Lambda$ is \emph{sparse} if for every $\eps>0$ there exists a finite cover $\Lambda=\Lambda_1\cup\cdots\cup \Lambda_m$ of $K$ such that
\begin{equation*}
\frac{1}{\gamma(\Lambda_1)}+\cdots+\frac{1}{\gamma(\Lambda_m)}<\eps.
\end{equation*}
\end{definition}

\begin{remarks}\mbox{}
\begin{enumerate}[\upshape (i)]
\item It follows from the definition that both properties are invariant for similarities, and they are {preserved by taking subsets.}
\item The empty set is sparse.
The singletons $\set{\lambda}$ are sparse because $\gamma(\set{\lambda})=+\infty$ by definition.

\item The union of finitely many sparse sets $\Lambda_1$, \ldots, $\Lambda_m$ is still sparse.
Indeed, given $\eps>0$, it suffices to apply the definition to each $\Lambda_j$ with $\eps/m$ instead of $\eps$ and then to take the union of all these covers.

\item Every finite set is sparse by (ii) and (iii).

{\item A sparse set remains sparse in every other equivalent norm.}
\end{enumerate}
\end{remarks}

Let us give a typical nontrivial example:

\begin{example}\label{e34}
The set $\Lambda=\set{k^3\ :\ k\in\ZZ}$ is sparse in $\RR$.
Indeed, for any fixed $\eps>0$ choose an integer $m>1/3\eps$, and consider the finite cover $\Lambda_1, \ldots, \Lambda_{2m+1}$ of $\Lambda$ by the  sets
\begin{equation*}
\set{k^3\ :\ k\ge m},\quad
\set{k^3\ :\ k\le -m}
\end{equation*}
and the singletons $\set{j^3}$ for $j=-m+1, \ldots, m-1$.
The first set has a uniform gap {$>3m^2$ } because if $k_1>k_2\ge m$, then
\begin{equation*}
k_1^3-k_2^3\ge (k_2+1)^3-k_2^3=3k_2^2+3k_2+1>{3m^2.}
\end{equation*}
The second set is a reflection of $\Lambda_1$, hence it has the same uniform gap.
Since the singletons have infinite uniform gaps, we conclude that
\begin{equation*}
\sum_{j=1}^{2m+1}\frac{1}{\gamma(\Lambda_j)}
<\frac{1}{{3m^2}}+\frac{1}{{3m^2}}+\frac{1}{+\infty}+\cdots+\frac{1}{+\infty}
={\frac{2}{3m^2}}<\eps.
\end{equation*}
\end{example}

The following theorem shows the importance of sparse sets; is an easy corollary of a more general theorem of Kahane  \cite{Kahane-1962} (see also \cite{Baiocchi-Komornik-Loreti-1999} and \cite[Theorem 8.1 and Proposition 8.4]{KomLor-2005} for some improvements).

\begin{theorem}\label{t35}
Let $(\lambda_k)_{k\in K}$ be a uniformly separated and sparse family of vectors in $\RR^N$. Then for each nonempty bounded open set $G\subset\RR^N$ there exist two  constants $C_1, C_2>0$ such that
\begin{equation*}
C_1\sum_{k\in K}\abs{x_k}^2
\le\int_G\abs{\sum_{k\in K}c_ke^{i\lambda_k\cdot z}}^2\, dz
\le C_2\sum_{k\in K}\abs{x_k}^2,
\end{equation*}
for every square summable family $(x_k)_{k\in K}$ of complex numbers.
\end{theorem}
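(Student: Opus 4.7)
The plan is to derive both inequalities from the $N$-dimensional Ingham--Kahane theorem referenced in the statement (see \cite[Theorem~8.1 and Proposition~8.4]{KomLor-2005}), whose hypothesis is phrased precisely in terms of the quantities appearing in the definition of a sparse family. That theorem asserts the following: given a nonempty bounded open set $G\subset\RR^N$, there exists a threshold $\delta(G)>0$ with the property that whenever a family $\Lambda\subset\RR^N$ admits a finite partition $\Lambda=\Lambda_1\cup\cdots\cup\Lambda_m$ with $\sum_{j=1}^m 1/\gamma(\Lambda_j)<\delta(G)$, both the direct and the inverse inequalities of Theorem \ref{t35} hold on $G$, with constants $C_1,C_2>0$ depending only on $G$ and on this sum.

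Granting this, the argument is essentially one line. I would set $\delta:=\delta(G)$ and apply the definition of a sparse set with $\eps:=\delta/2$; this produces a finite partition $\Lambda=\Lambda_1\cup\cdots\cup\Lambda_m$ satisfying $\sum_{j=1}^m 1/\gamma(\Lambda_j)<\delta$, so Kahane's theorem applies directly and yields both the lower and the upper estimate on $G$ with uniform constants. If one prefers, the upper bound can be reached independently and more cheaply from uniform separation alone, via a standard Plancherel--P\'olya comparison: enclose $G$ in a cube, majorize its characteristic function by a Fej\'er-type kernel with nonnegative Fourier transform, and use that a uniformly separated set $\Lambda$ has a uniformly bounded number of points in any cube of fixed side-length. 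The lower bound, however, genuinely requires the sparsity hypothesis, since the one-piece Ingham condition $\gamma(\Lambda)>\text{const}(G)$ is in general strictly stronger.

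The only real obstacle is concealed inside Kahane's multidimensional theorem itself, whose proof combines a recursion over the pieces $\Lambda_j$ with carefully chosen positive weights adapted to $G$ (in the spirit of the Beurling--Selberg extremal functions). Once that machinery is invoked as a black box, the contribution of the present corollary is merely the observation that the small-sum hypothesis on $\sum_{j=1}^m 1/\gamma(\Lambda_j)$ is exactly what the definition of sparsity was designed to produce, for any prescribed bounded open set $G$.
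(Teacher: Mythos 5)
Your proposal is correct and matches the paper's treatment: the paper states Theorem \ref{t35} without proof, as an easy corollary of Kahane's multidimensional Ingham theorem with exactly the references you invoke, the whole point being that sparsity supplies the smallness condition on $\sum_j 1/\gamma(\Lambda_j)$ for \emph{any} prescribed bounded open set $G$. Your additional remarks (refining the cover to a partition costs nothing since gaps only increase on subsets, and the upper bound already follows from uniform separation alone) are accurate but not needed beyond what the paper records.
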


\section{Observability on segments}\label{s4}
This section is dedicated to giving the proof of Theorems \ref{t11} and \ref{t12}. Let us start with the proof of Theorem \ref{t11}. We will apply Ingham's inequality infinitely many times, by ensuring the existence of the same constants for each application. This kind of proof was first applied in \cite{Haraux-1989}.

\begin{proof}[Proof of Theorem \ref{t11}]
Fix an integer $N\ge 1$ such that $\frac{2\pi}{N^2}<|I|$.
For each fixed $n$ with $\abs{n}\ge N$ the sequence
\begin{equation*}
(\omega_{m,n})_{m\in\ZZ}=(m^3+mn^2-m)_{m\in\ZZ}
\end{equation*}
has a uniform gap $\ge N^2$ because
\begin{equation*}
(m+1)^3+(m+1)n^2-(m+1)-(m^3+mn^2-m)
=3m(m+1)+n^2
\ge n^2\ge N^2,
\end{equation*}
for every $m\in\ZZ$.

It follows that we may apply Ingham's theorem for each of these sequences with  common uniform constants $C_1', C_2'>0$, depending only on $|I|$:
\begin{equation}\label{41}
C_1'\sum_{m\in\ZZ}\abs{c_{m,n}}^2
\le\int_I\abs{\sum_{m\in\ZZ}c_{m, n}e^{i(mx_0+\omega_{m, n}t)}}^2\, dt
\le C_2'\sum_{m\in\ZZ}\abs{c_{m,n}}^2,
\end{equation}
for each fixed $n\in\ZZ\setminus\set{-N+1, \ldots, N-1}$, in the previous inequality we have used the following trivial equality $\abs{c_{m,n}e^{imx_0}}=\abs{c_{m,n}}$.

For each of the remaining $2N-1$ values of $n$, the truncated sequence
\begin{equation*}
(m^3+mn^2-m)_{\abs{m}\ge N+1}
\end{equation*}
also has a uniform gap $\ge N^2$ because
\begin{equation*}
3m(m+1)+n^2=3|m|\cdot |m+1|+n^2\ge N^2,
\end{equation*}
whenever $\abs{m}\ge N+1$ and thus $\abs{m+1}\ge N$.
Applying the last part of Theorem \ref{t31}, there exist suitable constants $C_{1,n}', C_{2,n}'>0$, depending only on $|I|$, such that
\begin{equation}\label{42}
C_{1,n}'\sum_{m\in\ZZ}\abs{c_{m,n}}^2
\le\int_I\abs{\sum_{m\in\ZZ}c_{m, n}e^{i(mx_0+\omega_{m, n}t)}}^2\, dt
\le C_{2,n}'\sum_{m\in\ZZ}\abs{c_{m,n}}^2,
\end{equation}
for each fixed $n\in\set{-N+1, \ldots, N-1}$.

Setting
\begin{equation*}
C_1'':=\min\set{C_1',C_{1,1-N}',\ldots, C_{1,N-1}'}
\qtq{and}
C_2'':=\max\set{C_2',C_{2,1-N}',\ldots, C_{2,N-1}'},
\end{equation*}
and, adding the equations \eqref{41} and \eqref{42}, we obtain that
\begin{equation*}
C_1''\sum_{m\in\ZZ}\abs{c_{m,n}}^2
\le\int_I\abs{\sum_{m\in\ZZ}c_{m, n}e^{i(mx_0+\omega_{m, n}t)}}^2\, dt
\le C_2''\sum_{m\in\ZZ}\abs{c_{m,n}}^2,
\end{equation*}
for each $n\in\ZZ$.
Summing them we obtain the following estimate:
\begin{equation}\label{43}
C_1''\sum_{m,n\in\ZZ}\abs{c_{m,n}}^2
\le \sum_{n\in\ZZ}\int_I\abs{\sum_{m\in\ZZ}c_{m, n}e^{i(mx_0+\omega_{m, n}t)}}^2\, dt
\le C_2''\sum_{m,n\in\ZZ}\abs{c_{m,n}}^2.
\end{equation}
Since
\begin{align*}
\int_I\int_{\TT}\abs{u(x_0,y,t)}^2\ dy\ dt
&=\int_I\int_{\TT}\abs{\sum_{n\in\ZZ}e^{iny}\sum_{m\in\ZZ}c_{m, n}e^{i(mx_0+\omega_{m, n}t)}}^2\ dy\ dt\\
&=2\pi \sum_{n\in\ZZ}\int_I\abs{\sum_{m\in\ZZ}c_{m, n}e^{i(mx_0+\omega_{m, n}t)}}^2\ dt
\end{align*}
by Parseval's formula, and
\begin{equation*}
\norm{u_0}_{L^2(\TT\times\TT)}^2 =4\pi^2\sum_{m,n\in\ZZ}\abs{c_{m, n}}^2
\end{equation*}
by Proposition \ref{p21}, the estimates \eqref{15} of Theorem \ref{t11} follow from \eqref{43} with $C_1=C_1''/2\pi$ and $C_2=C_2''/2\pi$.
\end{proof}

Now we turn to the proof of Theorem \ref{t12}.
{Since we consider only initial data belonging to $L^2_x(\TT\times\TT)$, all coefficients $c_{0,n}$ vanish in the series representation \eqref{21} of the solutions. 
Since  $\omega_{m,n}=\omega_{m,-n}$, we have for any fixed $y_0\in\TT$ and for every non-degenerate bounded open interval $I$ }the following identity by a similar computation  as in the preceding proof:
\begin{align*}
\int_I\int_{\TT}&\abs{u(x,y_0,t)}^2\, dx\, dt\\
&=\int_I\int_{\TT}\abs{\sum_{{m\in\ZZ^*}}e^{imx}\sum_{n\in\ZZ}c_{m, n}e^{i(ny_0+\omega_{m, n}t)}}^2\, dx\, dt\\
&=2\pi \sum_{{m\in\ZZ^*}}\int_I\abs{\sum_{n\in\ZZ}c_{m, n}e^{i(ny_0+\omega_{m, n}t)}}^2\, dt\\
&=2\pi \sum_{{m\in\ZZ^*}}
\int_I\abs{c_{m, 0}{ e^{i\omega_{m, 0}t}}
+\sum_{n=1}^{\infty}\bj{c_{m, n}e^{iny_0}+c_{m, -n}e^{-iny_0}}e^{i\omega_{m, n}t}}^2\, dt.
\end{align*}
{Applying Ingham's theorem to the last expression as in the proof of Theorem \ref{t11}, we obtain   the equivalence
\begin{equation*}
\int_I\int_{\TT}\abs{u(x,y_0,t)}^2\, dx\, dt
\asymp\sum_{m\in\ZZ^*}\bj{\abs{c_{m, 0}}^2
+\sum_{n=1}^{\infty}\abs{c_{m, n}e^{iny_0}+c_{m, -n}e^{-iny_0}}^2}.
\end{equation*}
This shows that the observation on one horizontal segment does not allow us to determine separately $c_{m, n}$ and $c_{m, -n}$, but only a linear combination of them.}

If we observe the solution on two horizontal segments on identical or different time intervals $I_1$ and $I_2$, then we obtain that
\begin{equation}\label{44}
\int_{I_1}\int_{\TT}\abs{u(x,y_1,t)}^2\, dx\, dt
+\int_{I_2}\int_{\TT}\abs{u(x,y_2,t)}^2\, dx\, dt
\end{equation}
is equivalent to
\begin{equation*}
\sum_{m\in\ZZ^*}\bj{\abs{c_{m, 0}}^2
+\sum_{n=1}^{\infty}\bj{\abs{c_{m, n}e^{iny_1}+c_{m, -n}e^{-iny_1}}^2+\abs{c_{m, n}e^{iny_2}+c_{m, -n}e^{-iny_2}}^2}},
\end{equation*}
or to
\begin{equation}\label{45}
\sum_{m\in\ZZ^*}\bj{\abs{c_{m, 0}}^2
+\sum_{n=1}^{\infty}\bj{\abs{c_{m, n}e^{2iny_1}+c_{m, -n}}^2+\abs{c_{m, n}e^{2iny_2}+c_{m, -n}}^2}}.
\end{equation}
With this in hand, we are now in a position to prove the second main result of this article.

\begin{proof}[Proof of Theorem \ref{t12}]
(i) In view of the equivalence between \eqref{44} and \eqref{45} it suffices to show the implications
\begin{equation*}
c_{m, n}e^{2iny_1}+c_{m, -n}=c_{m, n}e^{2iny_2}+c_{m, -n}=0
\Longrightarrow
c_{m, n}=c_{m, -n}=0.
\end{equation*}
This holds if and only if $e^{2iny_1}\ne e^{2iny_2}$, that is, if $(y_1-y_2)/\pi$ is irrational, showing item (i).

\vspace{0.2cm}

(ii) Let us first assume that $N=1$, i.e., we observe the solution on two segments.
Then we have to prove the following estimate with some constant $\gamma>0$:
\begin{equation}\label{46}
\int_{I_1}\int_{\TT}\abs{u(x,y_1,t)}^2\, dx\, dt
+\int_{I_2}\int_{\TT}\abs{u(x,y_2,t)}^2\, dx\, dt
\ge \gamma\sum_{m\in\ZZ^*}\sum_{n\in\ZZ}\frac{\abs{c_{m, n}}^2}{n^2}.
\end{equation}

{Since $(y_1-y_2)/\pi$ is irrational, $e^{2iny_1}\ne e^{2iny_2}$ for every positive integer $n$.
Therefore we may express $c_{m, n}$ and $c_{m, -n}$ by
\begin{equation*}
a_{m, n}:=c_{m, n}e^{2iny_1}+c_{m, -n}
\qtq{and}
b_{m, n}:=c_{m, n}e^{2iny_2}+c_{m, -n}.
\end{equation*}
We obtain the equalities
\begin{equation*}
c_{m, n}=\frac{a_{m, n}-b_{m, n}}{e^{2iny_1}-e^{2iny_2}}
\qtq{and}
c_{m, -n}=\frac{b_{m, n}e^{2iny_1}-a_{m, n}e^{2iny_2}}{e^{2iny_1}-e^{2iny_2}},
\end{equation*}
and they imply the following estimates:
\begin{equation*}
\abs{c_{m, n}}^2+\abs{c_{m, -n}}^2
\le 4\frac{\abs{a_{m, n}}^2+\abs{b_{m, -n}}^2}{\abs{e^{2iny_1}-e^{2iny_2}}^2}.
\end{equation*}
We may write it in the following equivalent form:
\begin{equation*}
\abs{c_{m, n}e^{2iny_1}+c_{m, -n}}^2+\abs{c_{m, n}e^{2iny_2}+c_{m, -n}}^2
\ge \frac{\abs{e^{2in(y_1-y_2)}-1}^2}{4}
\bj{\abs{c_{m, n}}^2+\abs{c_{m, -n}}^2}.
\end{equation*}
Thanks to the fact that
\begin{equation*}
\abs{e^{2\pi i \theta}-1}\ge 2\gamma_1\dist (\theta,\ZZ),
\end{equation*}
}
for all real numbers $\theta$ with some constant $\gamma_1>0$, this yields 
\begin{multline*}
\abs{c_{m, n}e^{2iny_1}+c_{m, -n}}^2+\abs{c_{m, n}e^{2iny_2}+c_{m, -n}}^2\\
\ge \gamma_1^2 \bj{\dist \bj{\frac{n(y_1-y_2)}{\pi},\ZZ}}^2\bj{\abs{c_{m, n}}^2+\abs{c_{m, -n}}^2}.
\end{multline*}
Now we recall, e.g., from \cite[Case $n=1$ of Theorem III on p. 79]{Cassels-1957} that if $\theta$ is a quadratic irrational algebraic number, then there exists a constant $\gamma_2>0$ such that
\begin{equation*}
\dist (n\theta,\ZZ)\ge \frac{\gamma_2}{n},
\end{equation*}
for every integer $n\ge 1$.
Applying this with $\theta=(y_1-y_2)/\pi$ we obtain the  following estimate:
\begin{equation*}
\abs{c_{m, n}e^{2iny_1}+c_{m, -n}}^2+\abs{c_{m, n}e^{2iny_2}+c_{m, -n}}^2
\ge \frac{\gamma_1^2\gamma_2^2}{n^2} \bj{\abs{c_{m, n}}^2+\abs{c_{m, -n}}^2}.
\end{equation*}

Using this inequality the required estimate \eqref{46} follows from the equivalence of the expressions \eqref{44} and \eqref{45}.
\medskip

Under the weaker assumption that $(y_1-y_2)/\pi$ is an irrational algebraic number, we may repeat the preceding proof, by using at the end Roth's theorem (see. e.g., \cite[Theorem I on p. 104]{Cassels-1957}):
\begin{equation*}
\dist (n\theta,\ZZ)\ge \frac{\gamma_s}{n^s},
\end{equation*}
for every  integer $n\ge 1$.
This yields the theorem mentioned in Remark \ref{r13}.
\medskip

Now let us turn to the case $N\ge 2$.
Applying the first part of the proof for $N=1$ with $y_1-y_j$ for $j=2,\ldots, N+1$ we obtain the inequality
\begin{align*}
\sum_{j=1}^{N+1}\int_{I_j}\int_{\TT}
\norm{u(x,y_j,t)}^2\, dx\, dt
&\ge \gamma_1^2 \sum_{j=2}^{N+1}\bj{\dist (n(y_1-y_j)/\pi,\ZZ)}^2\bj{\abs{c_{m, n}}^2+\abs{c_{m, -n}}^2}\\
&\ge \gamma_1^2 \max_{j=2}^{N+1}\set{\dist (n(y_1-y_j)/\pi,\ZZ)}^2\bj{\abs{c_{m, n}}^2+\abs{c_{m, -n}}^2}.
\end{align*}
Thanks to our assumptions on $y_1,\ldots, y_{N+1}$ we may apply \cite[Theorem III on p. 79]{Cassels-1957} to obtain the inequality
\begin{equation*}
\max_{j=2}^{N+1}\set{\dist (n(y_1-y_j)/\pi,\ZZ)}
\ge \gamma_2n^{-1/N},
\end{equation*}
for all $n$.
Therefore we have
\begin{equation*}
\sum_{j=1}^{N+1}\int_{I_j}\int_{\TT}
\norm{u(x,y_j,t)}^2\, dx\, dt
\ge \gamma_1^2 \gamma_2^2n^{-2/N},
\end{equation*}
for all integers $n\ge 1$, so the proof of item (ii) is achieved, and Theorem \ref{t12} is proved.
\end{proof}

\section{Observability on small balls}\label{s5}
This section deals with the presentation of the proof of Theorem \ref{t14}. In this theorem, we consider only initial data belonging to  $L^2_x(\TT\times\TT):=H^{0,0}_x(\TT\times\TT)$.
Then the terms with $m=0$ are missing in formula \eqref{21}, so that the solution of \eqref{13} is given by
$$
u(x,y,t)=\sum_{(m,n)\in\ZZ^*\times\ZZ}c_{m, n}e^{i(mx+ny+\omega_{m, n}t)},\quad \omega_{m, n}:=m^3+mn^2-m,
$$
with square summable complex coefficients $c_{m, n}$.

In view of Theorem \ref{t35}, Theorem \ref{t14} will be proved if we establish the following  result:

\begin{proposition}\label{p51}
The set
\begin{equation*}
\Lambda=\set{\lambda_{m,n}:=(m,n,m^3+mn^2-m)\in\ZZ^3\ :\ (m,n)\in\ZZ^*\times\ZZ}
\end{equation*}
is uniformly separated and sparse.
\end{proposition}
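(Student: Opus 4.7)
\textbf{Proof plan for Proposition~\ref{p51}.}

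Uniform separation is immediate: $\Lambda\subset\ZZ^3$, and the first two coordinates of $\lambda_{m,n}$ already distinguish distinct pairs $(m,n)$, so any two distinct elements of $\Lambda$ differ by a nonzero integer vector, giving $\gamma(\Lambda)\geq 1$.

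For sparseness, given $\eps>0$, I plan to build a finite cover $\Lambda=\bigcup\Lambda_j$ with $\sum 1/\gamma(\Lambda_j)<\eps$, organized around a large parameter $M=M(\eps)$. The pieces will be: a finite core $\{\lambda_{m,n}:|m|\leq M,\,|n|\leq M\}$ (covered by singletons); \emph{row tails} $R_{m_0,\sigma}=\{\lambda_{m_0,n}:\sigma n>M\}$ for $m_0\in\{\pm 1,\ldots,\pm M\}$ and $\sigma\in\{\pm\}$; \emph{column tails} $C_{n_0,\tau}=\{\lambda_{m,n_0}:\tau m>M\}$ for $n_0\in\{-M,\ldots,M\}$ and $\tau\in\{\pm\}$; and four \emph{corner pieces} $\{\lambda_{m,n}:|m|>M,\,|n|>M\}$ split by signs of $m$ and $n$.

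For row and column tails, the gaps come from direct expansions of $\Delta f$. The identity $\Delta f=m_0(n_1-n_2)(n_1+n_2)$, used with $n_1,n_2$ of the same sign and absolute value $>M$, yields $\gamma(R_{m_0,\sigma})\geq 2|m_0|M$. The factorization $\Delta f=(m_1-m_2)(m_1^2+m_1m_2+m_2^2+n_0^2-1)$, with $m_1,m_2$ of the same sign and $|m_i|>M$, yields $\gamma(C_{n_0,\tau})\geq 3M^2-1$. Summing reciprocals, the row tails contribute
$$\sum_{\text{rows}}\frac{1}{\gamma}\leq \frac{2}{M}\sum_{m=1}^M\frac{1}{m}=O\!\left(\frac{\log M}{M}\right),$$
while the column tails contribute $(4M+2)/(3M^2-1)=O(1/M)$.

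The corner pieces are the main obstacle. For two distinct points in a corner piece, I would expand $\Delta f=kA+\ell B$ with $k=\Delta m$, $\ell=\Delta n$, $A=m_1^2+m_1m_2+m_2^2+n_1^2-1\geq 3M^2$, and $B=m_2(n_1+n_2)\geq 2M^2$. The cases $k=0$, $\ell=0$, or $k,\ell$ of the same sign all give $|\Delta f|\gtrsim M^2$ immediately. The delicate case is opposite signs, where Pell-type arithmetic coincidences---for instance $g(3,12)=g(2,15)=456$, producing a pair at distance $\sqrt{10}$, together with an infinite family of such collisions parametrized by the Pell equation $X^2-6Z^2=1$---can force $\Delta f=0$ at isolated $m$-values. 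My plan is to handle this by (i) choosing $M$ outside the finite union of exponentially sparse Pell-type progressions associated with all $(k,\ell)$ such that $k^2+\ell^2<R^2$ for the target gap $R$, and (ii) a discriminant analysis of $kA+\ell B$ viewed as an integer-valued quadratic in $n_2$, showing via irrationality of $\sqrt{m(m+1)}$-type expressions that at integer arguments $|\Delta f|\gtrsim M$. This gives $\gamma\gtrsim M$ on each corner piece and contribution $O(1/M)$. Combining all terms yields $\sum 1/\gamma=O((\log M)/M)$, which is $<\eps$ once $M$ is taken large (and chosen outside the Pell-bad set).
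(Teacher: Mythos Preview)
Your treatment of uniform separation, the row tails, and the column tails is fine, but the plan for the four corner pieces has a genuine gap.

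The Pell-type collisions you correctly spotted are not removable by a clever choice of $M$. For $(k,\ell)=(1,-3)$ the equation $Q(1,-3,m,n)=0$ reduces (after your own discriminant step) to the Pell equation $s^2-24t^2=1$, which has infinitely many solutions; these give an infinite sequence of pairs, e.g.\ $(m,n)=(25,132),(24,135)$, then $(243,1320),(242,1323)$, and so on, all lying in the $(+,+)$ corner and all at Euclidean distance exactly $\sqrt{10}$. Hence for \emph{every} $M$ the corner piece $\{\lambda_{m,n}:m>M,\ n>M\}$ contains pairs at distance $\sqrt{10}$, so its gap is bounded independently of $M$ and cannot be $\gtrsim M$. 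Your step~(ii) (``$|\Delta f|\gtrsim M$ at integer arguments via irrationality of $\sqrt{m(m+1)}$'') is therefore false as stated, and step~(i) cannot help because the bad set you would have to remove is infinite, not a finite set absorbable into the core.

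What is missing is a mechanism to show that the infinite collision curves are \emph{themselves} sparse. The paper does not use a row/column/corner decomposition at all. Instead it indexes by the difference vector $(k,\ell)$: for each $(k,\ell)$ with $|k|,|\ell|<R$ it studies the near-collision set $B_{k,\ell,R}=\{(m,n):|Q(k,\ell,m,n)|<R\}$. In the indefinite case $0<3k^2<\ell^2$ this set lies in a finite set together with two sequences $(m_j(n),n)$ with $m_j(n)=\alpha_j n+\beta_j+o(1)$, $\alpha_j\neq 0$ (your Pell curves are a special instance). Along each such sequence the third coordinate $f(n)=m_j(n)^3+m_j(n)n^2-m_j(n)$ satisfies $f(n+1)-f(n)\sim 3(\alpha_j^3+\alpha_j)n^2\to\pm\infty$, so the image in $\ZZ^3$ is sparse by the one-parameter criterion (Lemma~\ref{l53}). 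The union $\Lambda'$ of these finitely many sparse sets is sparse, and $\gamma(\Lambda\setminus\Lambda')\ge R$ by construction (Lemma~\ref{l52}). The essential idea you are missing is that the bad curves, while destroying the gap of any corner piece, are sparse \emph{in the third coordinate direction}; one must carve them out and treat them separately rather than try to avoid them.
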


For simplicity, we will consider the usual Euclidean norm of $\RR^N$; this is not essential because all other norms are equivalent to this one.
We recall from Remark 1 (v) that a sparse set remains sparse in every other equivalent norm.
Then $\Lambda$ is obviously uniformly separated with $\gamma(\Lambda)\ge 1$. We now prove that $\Lambda$ is sparse. To do that, we need four lemmas.

\begin{lemma}\label{l52}
A set $\Lambda$ in a normed space is sparse if it has the following property: for every $R>0$ there exists a sparse set $\Lambda'$ such that $\gamma(\Lambda\setminus \Lambda')\ge R$.
\end{lemma}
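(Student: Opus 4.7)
The plan is to unpack the definition of sparseness directly. Fix $\eps>0$; I need to produce a finite cover $\Lambda=\Lambda_0\cup\Lambda_1\cup\cdots\cup\Lambda_m$ with $\sum_{j=0}^m 1/\gamma(\Lambda_j)<\eps$. The idea is to split $\Lambda$ into one ``large-gap'' piece that handles most of $\Lambda$ cheaply and a ``sparse'' piece that is covered using the hypothesized sparse set $\Lambda'$.

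Concretely, I would pick $R>2/\eps$, so that $1/R<\eps/2$. Applying the hypothesis with this $R$ yields a sparse set $\Lambda'$ such that $\gamma(\Lambda\setminus\Lambda')\ge R$. Define $\Lambda_0:=\Lambda\setminus\Lambda'$; then $1/\gamma(\Lambda_0)\le 1/R<\eps/2$, so $\Lambda_0$ contributes less than $\eps/2$ to the sum. The remainder of $\Lambda$ is contained in $\Lambda\cap\Lambda'$, which by Remark~1~(i) is still sparse (sparseness is preserved by taking subsets). Applying the definition of sparse to $\Lambda\cap\Lambda'$ with $\eps/2$, I obtain a finite cover $\Lambda\cap\Lambda'=\Lambda_1\cup\cdots\cup\Lambda_m$ with $\sum_{j=1}^m 1/\gamma(\Lambda_j)<\eps/2$.

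Putting these together, $\Lambda=\Lambda_0\cup\Lambda_1\cup\cdots\cup\Lambda_m$ is a finite cover and
\begin{equation*}
\sum_{j=0}^m\frac{1}{\gamma(\Lambda_j)}<\frac{\eps}{2}+\frac{\eps}{2}=\eps,
\end{equation*}
so $\Lambda$ is sparse. Since $\eps>0$ was arbitrary, the proof is complete.

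There is essentially no obstacle here; the only mildly subtle point is to make sure the cover produced really covers all of $\Lambda$ rather than only $\Lambda'$, which is why I intersect with $\Lambda$ and invoke the fact that sparseness passes to subsets. Everything else is bookkeeping with the definition.
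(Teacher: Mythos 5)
Your proof is correct and follows essentially the same route as the paper's: choose $R>2/\eps$, take $\Lambda\setminus\Lambda'$ as one piece of the cover with $1/\gamma(\Lambda\setminus\Lambda')<\eps/2$, and cover the sparse remainder with total contribution below $\eps/2$. Your extra step of intersecting $\Lambda'$ with $\Lambda$ before covering is a harmless (and slightly tidier) variant of the same argument.
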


\begin{proof}
For any fixed $\eps>0$ choose $R>2/\eps$, and let $\Lambda_1$, \ldots, $\Lambda_m$ be a finite cover of the  corresponding sparse set $\Lambda'$ such that
\begin{equation*}
\frac{1}{\gamma(\Lambda_1)}+\cdots+\frac{1}{\gamma(\Lambda_m)}<\frac{\eps}{2}.
\end{equation*}
Then the finitely many sets $\Lambda_1, \ldots, \Lambda_m, \Lambda'$ cover of $\Lambda$, and
\begin{equation*}
\frac{1}{\gamma(\Lambda_1)}+\cdots+\frac{1}{\gamma(\Lambda_m)}+\frac{1}{\gamma(\Lambda\setminus \Lambda')}<\frac{\eps}{2}+\frac{1}{R}\le \eps.\qedhere
\end{equation*}
\end{proof}

The following is a generalization of Example \ref{e34}:

\begin{lemma}\label{l53}
Consider a set of the form
\begin{equation*}
\Lambda=\set{(g(m),f(m))\ :\ m\in\ZZ}\subset\RR^N,
\end{equation*}
where $g:\ZZ\to\ZZ^{N-1}$ is an arbitrary function, and $f:\ZZ\to\ZZ$ is a function satisfying the limit relation
\begin{equation}\label{52}
\lim_{|m|\to -\infty}\bj{f(m+1)-f(m)}
=+\infty.
\end{equation}
Then $\Lambda$ is sparse. The same conclusion holds if the limit is $-\infty$.
\end{lemma}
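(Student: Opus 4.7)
The plan is to apply Lemma \ref{l52} with $R > 0$ arbitrary. So I need to exhibit a sparse subset $\Lambda'\subset \Lambda$ such that $\gamma(\Lambda\setminus\Lambda')\geq R$. Since the projection $\pi(x_1,\ldots,x_N):=x_N$ is a contraction for the Euclidean norm, the Euclidean gap between two points of $\Lambda$ is bounded below by $|f(m_1)-f(m_2)|$, so it suffices to show that the second-coordinate gaps between the removed points are uniformly $\geq R$.

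Given $R$, by the hypothesis \eqref{52} choose an integer $M\geq 1$ such that $f(m+1)-f(m)\geq R$ for every $m\in\ZZ$ with $|m|\geq M$. A telescoping argument then gives two monotonicity properties: for every $m\geq M$,
\begin{equation*}
f(m) \geq f(M) + R(m-M),
\end{equation*}
so $f(m)\to+\infty$ as $m\to+\infty$; and for every $m\leq -M$, iterating $f(k)\leq f(k+1)-R$ backwards yields
\begin{equation*}
f(m)\leq f(-M)-R(-M-m),
\end{equation*}
so $f(m)\to-\infty$ as $m\to-\infty$. Consequently we may enlarge $M$ to an integer $M^*\geq M$ for which $f(M^*)\geq f(-M^*)+R$.

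I then set $\Lambda':=\{(g(m),f(m)) : |m|<M^*\}$. This is a finite set, hence sparse by the remarks following the definition. It remains to verify $\gamma(\Lambda\setminus\Lambda')\geq R$. Take two distinct indices $m_1<m_2$ with $|m_1|,|m_2|\geq M^*$. Three cases arise:
\begin{enumerate}[\upshape (a)]
\item $M^*\leq m_1<m_2$: by telescoping, $f(m_2)-f(m_1)=\sum_{k=m_1}^{m_2-1}\bj{f(k+1)-f(k)}\geq R(m_2-m_1)\geq R$;
\item $m_1<m_2\leq -M^*$: the same telescoping on indices $k\leq -M^*-1\leq -M-1$, for which $|k|\geq M$, yields $f(m_2)-f(m_1)\geq R$;
\item $m_1\leq -M^*<M^*\leq m_2$: by the monotonicity obtained above, $f(m_2)\geq f(M^*)\geq f(-M^*)+R\geq f(m_1)+R$.
\end{enumerate}
In every case $|f(m_1)-f(m_2)|\geq R$, hence the Euclidean distance between the corresponding points of $\Lambda\setminus\Lambda'$ is at least $R$. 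Lemma \ref{l52} then gives sparseness of $\Lambda$.

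The $-\infty$ variant reduces to the above by replacing $f$ by $-f$, which does not change $\Lambda$ up to reflection in the last coordinate, an isometry that preserves sparseness. I expect the main (minor) obstacle to be the cross-term case (c): merely cutting out a central window is not enough to separate large positive from large negative indices, because $f$ on the two tails might a priori take overlapping values; the remedy is to enlarge the window to $M^*$ using the derived limits $f(\pm\infty)=\pm\infty$, so that the second-coordinate values on the two tails lie in disjoint half-lines separated by at least $R$.
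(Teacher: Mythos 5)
Your proof is correct and follows essentially the same route as the paper's: remove a finite (hence sparse) central window $\Lambda'=\{(g(m),f(m)):|m|<M^*\}$, use the one-coordinate lower bound $\|v\|\ge|v_N|$ together with telescoping on the two tails and the extra separation $f(M^*)\ge f(-M^*)+R$ to get $\gamma(\Lambda\setminus\Lambda')\ge R$, and conclude via Lemma \ref{l52}; the $-\infty$ case is handled by the same reflection argument. The only difference is cosmetic: the paper asserts $f(\pm\infty)=\pm\infty$ directly and picks $n$ with $f(n)-f(-n)\ge R$ in one step, whereas you derive this by explicit telescoping before enlarging $M$ to $M^*$.
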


\begin{proof}
As mentioned before, we consider the Euclidean norm of $\RR^N$, so that
\begin{equation*}
\norm{(x_1,\ldots,x_N)}\ge\max\set{\abs{x_1},\ldots,\abs{x_N}}.
\end{equation*}

It follows from our assumptions that
\begin{equation*}
\lim_{m\to -\infty}f(m)=-\infty\qtq{and}
\lim_{m\to +\infty}f(m)=+\infty.
\end{equation*}
Therefore, for any fixed $R>0$ we may choose an  integer $n\ge 1$ such that
\begin{align*}
&f(n)-f(-n)\ge R,\\
&f(m+1)-f(m)\ge R\qtq{for all}m\le -n, \quad \text{and}\\
&f(m+1)-f(m)\ge R\qtq{for all}m\ge n.
\end{align*}
In particular, $f$ is increasing in the intervals $(-\infty,-n]$ and $[n,+\infty)$.
We complete the proof of the first part by showing that the set
\begin{equation*}
\Lambda':=\set{(g(m),f(m))\ :\ |m|<n}
\end{equation*}
satisfies the hypotheses of Lemma \ref{l52}.

Indeed, first of all, $\Lambda\setminus \Lambda'$ is a finite set, hence it is sparse.
It remains to show that if $|m_1|\ge n$, $|m_2|\ge n$ and $m_1\ne m_2$, then
\begin{equation*}
\norm{(g(m_1),f(m_1))-(g(m_2),f(m_2))}\ge R.
\end{equation*}
We may assume by symmetry that $m_1>m_2$, so that $m_1\ge m_2+1$.

We are going to prove the stronger inequality $f(m_1)-f(m_2)\ge R$.
It is satisfied if $m_1$ and $m_2$ have different signs, because then
\begin{equation*}
f(m_1)-f(m_2)\ge f(n)-f(-n)\ge R.
\end{equation*}
If $m_1$ and $m_2$ have equal signs, then
\begin{equation*}
f(m_1)-f(m_2)\ge f(m_2+1)-f(m_2)\ge R.
\end{equation*}

If the limit is $-\infty$ is the assumption of the lemma, then $-\Lambda$ is sparse by the first part, and then its reflection $\Lambda$ is also sparse.
\end{proof}

Based on Lemmas \ref{l52}, \ref{l53} and on the fact that a finite union of sparse sets is still sparse, our strategy for the proof of Proposition \ref{p51} is the following: for any given $R>0$ we will construct a finite number of sparse sets $\Lambda_{k,\ell,R}$ of the type studied in Lemma \ref{l53} such that their union $\Lambda'$ satisfies the uniform gap condition $\gamma(\Lambda\setminus \Lambda')\ge R$.

The construction of the sets $\Lambda_{k,\ell,R}$ is inspired by a method developed by Jaffard \cite{Jaffard-1990} for the proof of the interior observability of rectangular plates.

To do that, we will need some preliminary technical results.
If
\begin{align*}
&\lambda_{m,n}=(m,n,m^3+mn^2-m)
\intertext{and}
&\lambda_{m+k,n+\ell}=\bj{{m+k},n+\ell,(m+k)^3+(m+k)(n+\ell)^2-(m+k)},
\end{align*}
are two distinct elements of
\begin{equation*}
\Lambda=\set{\lambda_{m,n}:=(m,n,m^3+mn^2-m)\in\ZZ^3\ :\ (m,n)\in\ZZ^*\times\ZZ},
\end{equation*}
then $(k,\ell)\in\ZZ^2\setminus\set{(0,0)}$, and
\begin{equation}\label{53}
\lambda_{m+k,n+\ell}-\lambda_{m,n}
=(k,\ell,Q(k,\ell,m,n))
\end{equation}
with
\begin{equation*}
Q(k,\ell,m,n):=k(3m^2+n^2)+2\ell mn
+(3k^2+\ell^2)m+{k(2\ell n+k^2+\ell^2-1).}
\end{equation*}

\begin{lemma}
The polynomial $Q(k,\ell,m,n)$ satisfies the following relation:
\begin{equation}\label{54}
Q(k,\ell,m,n)=k(3m^2+n^2)+2\ell mn+o\bj{m^2+n^2}\qtq{as}\norm{(m,n)}\to+\infty.
\end{equation}
Furthermore, we have
\begin{equation}\label{55}
Q(k,\ell,m,n)=m\ell(2n+\ell)\qtq{if}k=0,
\end{equation}
and
\begin{equation}\label{56}
\begin{split}
3kQ(k,\ell,m,n)=&\bj{3k\bj{m+\frac{k}{2}}+\ell\bj{n+\frac{\ell}{2}}
}^2
\\&-(\ell^2-3k^2)\bj{n+\frac{\ell}{2}}^2+\frac{3k^2}{4}\bj{k^2+\ell^2-4}
\end{split}
\end{equation}
in the general case.
\end{lemma}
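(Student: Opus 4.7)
These three statements are all algebraic identities in the four integers $k,\ell,m,n$, read off from the explicit expansion
\begin{equation*}
Q(k,\ell,m,n) = k(3m^2+n^2) + 2\ell mn + (3k^2+\ell^2)m + k(2\ell n + k^2+\ell^2-1).
\end{equation*}
No analytic input is needed, so my plan amounts to organizing three direct computations and flagging the one place where a sign error would be most costly.

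For the asymptotic statement \eqref{54}, I would simply read off the displayed formula: the two summands $k(3m^2+n^2)$ and $2\ell mn$ exhaust the part of $Q$ that is of degree $2$ in $(m,n)$, while the remaining summands $(3k^2+\ell^2)m + k(2\ell n+k^2+\ell^2-1)$ form a polynomial of degree $\le 1$ in $(m,n)$ whose coefficients depend only on the fixed pair $(k,\ell)$. Any such polynomial is $O(\|(m,n)\|)$, and hence $o(m^2+n^2)$ as $\|(m,n)\|\to\infty$. For \eqref{55}, substituting $k=0$ annihilates the last summand, reduces $(3k^2+\ell^2)m$ to $\ell^2 m$, and leaves $2\ell mn + \ell^2 m = m\ell(2n+\ell)$.

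The only computation requiring real care is \eqref{56}, and I would perform it by completing the square in two successive stages. Multiplying the formula for $Q$ by $3k$ and using $9k^2m^2 + 6k\ell mn = (3km+\ell n)^2 - \ell^2 n^2$ converts the degree-two part of $3kQ$ into $(3km+\ell n)^2 - (\ell^2-3k^2)n^2$. Next, setting $A := (3k^2+\ell^2)/2$ and using
\begin{equation*}
(3km+\ell n + A)^2 = (3km+\ell n)^2 + 6kA\,m + 2A\ell n + A^2
\end{equation*}
absorbs the linear-in-$m$ term $3k(3k^2+\ell^2)m = 6kA\,m$ exactly; moreover $3km+\ell n + A = 3k(m+\tfrac{k}{2})+\ell(n+\tfrac{\ell}{2})$, which reproduces the first square in \eqref{56}. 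The residual $n$-terms combine with $-(\ell^2-3k^2)n^2$ into $-(\ell^2-3k^2)(n+\tfrac{\ell}{2})^2$ plus an explicit constant, and the surviving bookkeeping of constants must collapse to $\tfrac{3k^2}{4}(k^2+\ell^2-4)$. This final reconciliation of constants is the one place where a slip is likely, and I would guard against it with a quick sanity check: specialising \eqref{56} at $(m,n)=(0,0)$, both sides should reduce to $3k^2(k^2+\ell^2-1)$, which independently fixes the constant.
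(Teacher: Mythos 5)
Your proposal is correct and follows essentially the same route as the paper: \eqref{54} and \eqref{55} by direct inspection of the explicit formula for $Q$, and \eqref{56} by completing the square in $3kQ$ to reach the form $\bj{3k(m+\tfrac{k}{2})+\ell(n+\tfrac{\ell}{2})}^2+c_2(n+c_3)^2+c_4$, which is exactly the paper's (sketched) computation. Your added sanity check at $(m,n)=(0,0)$, where both sides reduce to $3k^2(k^2+\ell^2-1)$, is a sensible safeguard and the constants do reconcile as claimed.
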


\begin{proof}
The relations \eqref{54} and \eqref{55} can be obtained according to the classical procedure to obtain the canonical form a quadratic form. Note that we may write $3kQ(k,\ell,m,n)$ in the form
\begin{equation*}
(3k+\ell n+c_1)^2+c_2(n+c_3)^2+c_4,
\end{equation*}
with suitable constants $c_1,\ldots, c_4$. An elementary computation allows us to determine these constants, leading to the identity \eqref{56}.
\end{proof}

In order to estimate the norms $\lambda_{m+k,n+\ell}-\lambda_{m,n}$ by \eqref{53}, first we investigate the sets
\begin{equation*}
B_{k,\ell,R}:=\set{(m,n)\in\ZZ^*\times\ZZ\ :\ \abs{Q(k,\ell,m,n)}<R},
\end{equation*}
for $(k,\ell)\in\ZZ^2\setminus\set{(0,0)}$ and $R>0$.

\begin{lemma}\label{l55}\mbox{}
Let $(k,\ell)\in\ZZ^2\setminus\set{(0,0)}$ and $R>0$.
\begin{enumerate}[\upshape (i)]
\item If $3k^2>\ell^2$, then the set $B_{k,\ell,R}$ is finite.

\item If $k=0$ and $\ell$ is even, then $B_{k,\ell,R}$ is covered by  a finite set and the  sequence $\bj{m,-\frac{\ell}{2}}$, with $m\in\ZZ^*$.

\item If $k=0$ and $\ell$ is odd, then $B_{k,\ell,R}$ is finite.

\item If $0<3k^2<\ell^2$, then $B_{k,\ell, R}$ is covered by a finite set and  two sequences $(m_1(n),n)_{n\in\ZZ}$ and $(m_2(n),n)_{n\in\ZZ}$ satisfying the relations
\begin{equation}\label{57}
m_j(n)=\alpha_jn+\beta_j+o(1),\quad |n|\to+\infty,\quad j=1,2
\end{equation}
with suitable constants $\alpha_j\ne 0$ and $\beta_j$ (see Figure \ref{fig1} for an example of this case).
\end{enumerate}
\end{lemma}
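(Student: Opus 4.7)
The plan is to treat the four cases separately, exploiting the explicit identities \eqref{55} and \eqref{56} to control the growth of $|Q(k,\ell,m,n)|$ as $\|(m,n)\|\to\infty$.

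For case (i) with $3k^2>\ell^2$ (so $k\ne 0$), I would rewrite \eqref{56} as
\[
3kQ = A^2 + (3k^2-\ell^2)B^2 + C,
\]
where $A := 3k(m+k/2)+\ell(n+\ell/2)$, $B := n+\ell/2$, and $C$ is a constant. Since $3k^2-\ell^2>0$, the right-hand side is a positive definite quadratic form in $(A,B)$ plus a constant. One then checks that $A^2+B^2\to\infty$ whenever $\|(m,n)\|\to\infty$: if $|n|\to\infty$ then $|B|\to\infty$, while if $|n|$ stays bounded and $|m|\to\infty$ then $|A|\to\infty$ because $k\ne 0$. Hence $|Q|\to\infty$, and $B_{k,\ell,R}$ is finite.

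For cases (ii)--(iii), where $k=0$ and $\ell\ne 0$, identity \eqref{55} reduces the inequality $|Q|<R$ to $|m|\,|2n+\ell|<R/|\ell|$. In case (iii) $\ell$ is odd, so $2n+\ell$ is odd and nowhere vanishing; both positive integer factors $|m|$ and $|2n+\ell|$ then have bounded product, hence are bounded individually, giving a finite set. In case (ii) $\ell$ is even, and $2n+\ell=0$ precisely when $n=-\ell/2\in\ZZ$; this single value of $n$ contributes the full sequence $(m,-\ell/2)_{m\in\ZZ^*}$, and for every other $n$ we have $|2n+\ell|\ge 2$, so the same product bound leaves only finitely many extra pairs.

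Case (iv), where $0<3k^2<\ell^2$, is the substantive one. Setting $\alpha := \sqrt{\ell^2-3k^2}>0$, I would view $Q$ as a quadratic in $m$ with $n$ fixed, $Q = 3km^2 + p(n)m + q(n)$ with $p(n)=2\ell n+O(1)$ and $q(n)=kn^2+O(n)$. Its discriminant equals $4\alpha^2 n^2+O(n)$, producing two real roots $\mu_\pm(n)=\frac{-\ell\pm\alpha\,\mathrm{sgn}(n)}{3k}\,n+O(1)$ for $|n|$ large. Factoring $Q=3k(m-\mu_+(n))(m-\mu_-(n))$, the condition $|Q|<R$ becomes $|m-\mu_+(n)|\,|m-\mu_-(n)|<R/(3|k|)$. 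Since the gap $|\mu_+(n)-\mu_-(n)|\sim 2\alpha|n|/(3|k|)$ is large, one of the two factors must be $\ge \alpha|n|/(3|k|)+O(1)$, forcing the other to be $O(1/|n|)$. This pins $m$ to the unique nearest integer to $\mu_+(n)$ or $\mu_-(n)$ once $|n|$ is sufficiently large, giving two integer sequences $m_1,m_2$. Defining $m_1(n)$ to follow $\mu_+$ for $n>0$ and $\mu_-$ for $n<0$ absorbs the $\mathrm{sgn}(n)$ and yields the single asymptotic slope $\alpha_1=(-\ell+\alpha)/(3k)\ne 0$, and symmetrically $\alpha_2=(-\ell-\alpha)/(3k)\ne 0$ for $m_2$; the remaining $(m,n)\in B_{k,\ell,R}$ with $|n|$ below the threshold form a finite set.

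The main obstacle is case (iv): one has to match the two local branches of $\{Q=0\}$ across $n=0$ so that each $m_j$ has a single asymptotic slope, and verify that the narrow stripes of width $O(1/|n|)$ around $\mu_\pm(n)$ really contain at most one integer for $|n|$ large, so that the covering is by two genuine sequences rather than by wider bands.
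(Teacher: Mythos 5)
Your proof is correct and follows essentially the same route as the paper's: cases (i)--(iii) are handled identically via the definiteness of the quadratic part and the factorization $Q=m\ell(2n+\ell)$, while in case (iv) your factorization of $Q$ as a quadratic in $m$ with roots $\mu_{\pm}(n)$ is just a reparametrization of the paper's hyperbola argument $\abs{y^2-x^2}<C$ shrinking to its asymptotes, and it yields the same lines $m=\alpha_j n+\beta_j$ with $\alpha_j=(\pm\sqrt{\ell^2-3k^2}-\ell)/(3k)\ne 0$. The branch-matching across $n=0$ that you flag as the main obstacle does work out (the constant terms of the two half-branches agree, not only the slopes), so there is no gap.
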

{It is clear that the four cases of Lemma \ref{l55} correspond to all the possible situations.}

\begin{figure}[H]
\centering
\includegraphics[scale=0.20]{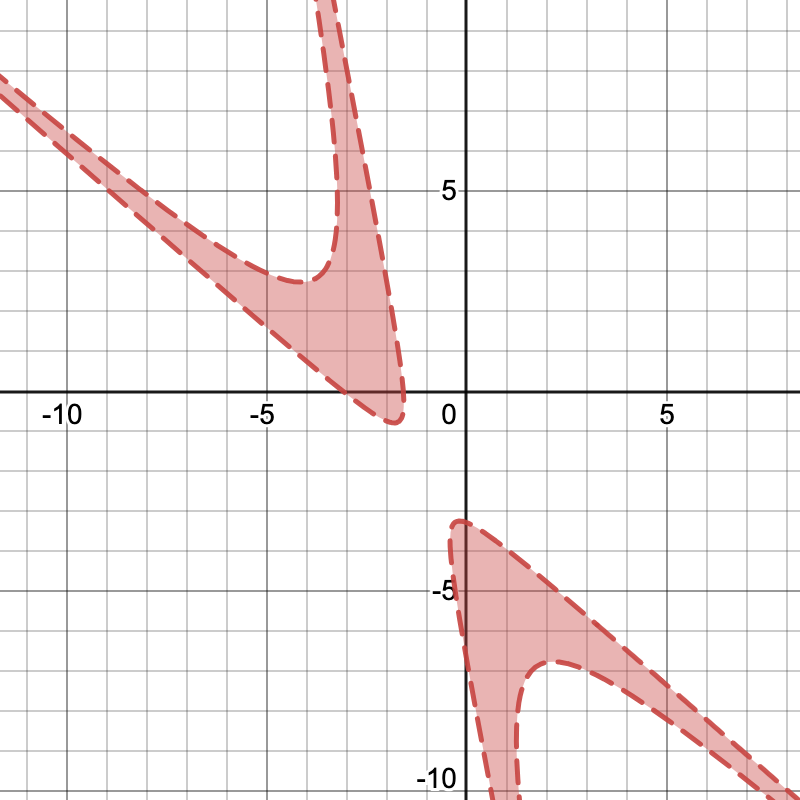}
\caption{(iv) The region containing $B_{2,4,7}$}
\label{fig1}
\end{figure}

\begin{proof}
(i) If $3k^2>\ell^2$, then the quadratic form $k(3m^2+n^2)+2\ell mn$ of $(m,n)$ is (positive or negative) definite.
Then we infer from \eqref{54} the relation
\begin{equation*}
\abs{Q(k,\ell,m,n)}\to+\infty\qtq{as}\norm{(m,n)}\to+\infty,
\end{equation*}
so that $B_{k,\ell,R}$ is a finite set.
\medskip

(ii) If $k=0$, then $\ell\ne 0$ and
\begin{equation*}
Q(k,\ell,m,n)=m\ell (2n+\ell)
\end{equation*}
by \eqref{55}.
This expression vanishes if $\ell$ is even and $n=-\ell/2$, so that $\ZZ^*\times\set{-\ell/2}\subset B_{k,\ell,R}$ in this case.

Since $m\ne 0$ by our assumption, if $(m,n)$ does not belong to $\ZZ^*\times\set{-\ell/2}$, then
\begin{equation*}
\abs{Q(k,\ell,m,n)}\ge \min\set{|m|,|\ell(2n+\ell)|},
\end{equation*}
and the last expression becomes larger than $R$ when $\norm{(m,n)}\to+\infty$ is sufficiently large.
Therefore $B_{k,\ell,R}$ is covered by $\ZZ^*\times\set{-\ell/2}$  and a finite set.
\medskip

(iii) We may repeat the proof of (ii).
Since now $\ell$ is odd, $\ZZ^*\times\set{-\ell/2}$ is disjoint from $B_{k,\ell,R}\subset\ZZ^2$, and hence $\ZZ^*\times\set{-\ell/2}$ is not necessary for the covering of $B_{k,\ell,R}$.
\medskip

(iv) Setting
\begin{equation*}
y=3k\bj{m+\frac{k}{2}}+\ell\bj{n+\frac{\ell}{2}}
\qtq{and}
x=\sqrt{\ell^2-3k^2}\bj{n+\frac{\ell}{2}}
\end{equation*}
for brevity, \eqref{56} may be written in the form
\begin{equation*}
3kQ(k,\ell,m,n)
=y^2-x^2+\frac{3k^2}{4}\bj{k^2+\ell^2-4}.
\end{equation*}
Using this equality, the condition $\abs{Q(k,\ell,m,n)}<R$ in the definition of $B_{k,\ell,R}$ implies the  relation
\begin{equation*}
\abs{y^2-x^2}<3|k|R+\frac{3k^2}{4}\abs{k^2+\ell^2-4}=:C,
\end{equation*}
or equivalently
\begin{equation*}
-C<y^2-x^2<C.
\end{equation*}
Observe that $|x|\to+\infty\Longleftrightarrow|y|\to+\infty$ in this region (see Figure \ref{fig2}).
\begin{figure}[H]
\centering
\includegraphics[scale=0.20]{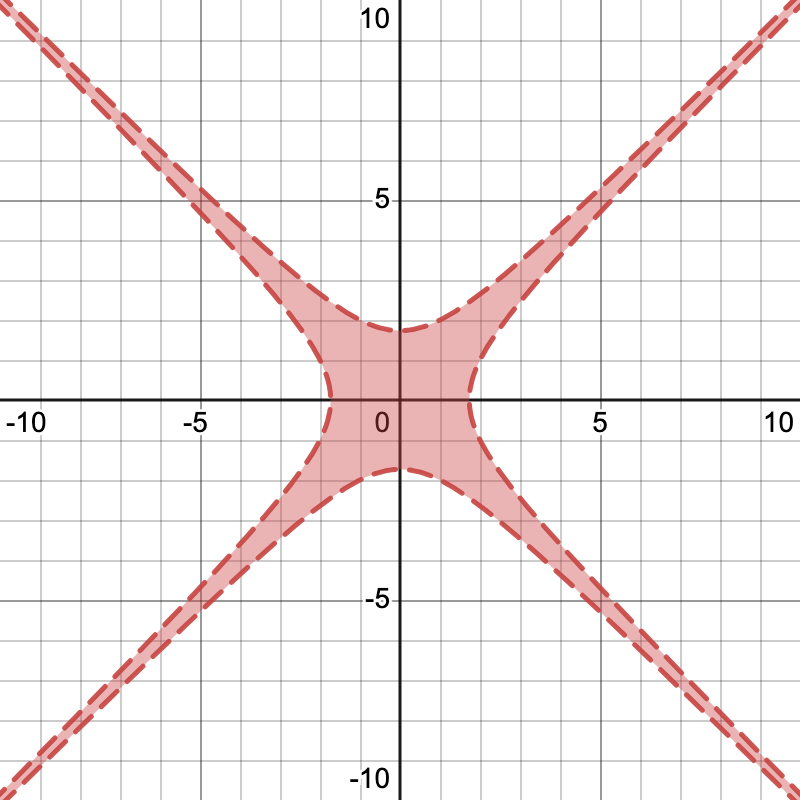}
\caption{The region $-9<y^2-x^2<9$}
\label{fig2}
\end{figure}

Hence either
\begin{align*}
&|x|<\sqrt{C}\qtq{and}|y|<\sqrt{2C},
\intertext{or}
&|x|\ge\sqrt{C}\qtq{and}\sqrt{x^2-C}<|y|<\sqrt{x^2+C}.
\end{align*}
Observe that the  set of these vectors $(x,y)$ shrinks to asymptotes $y=\pm x$ of the corresponding hyperbola because, as $|x|\to +\infty$,
\begin{equation*}
\Bigl\lvert|y|-|x|\Bigr\rvert
<\sqrt{x^2+C}-\sqrt{x^2-C}
=\frac{2C}{\sqrt{x^2+C}+\sqrt{x^2-C}}
\to 0.
\end{equation*}
We may also write it in the following form:
\begin{equation*}
\min\set{\abs{y-x},\abs{y+x}}=o(1)\qtq{as}|x|\to+\infty.
\end{equation*}
Returning to the variables $m, n$, dividing by $3k$, and introducing the sequences
\begin{equation*}
\widetilde{m_1}(n):=\frac{\sqrt{\ell^2-3k^2}-\ell}{3k}\bj{n+\frac{\ell}{2}}-\frac{k}{2}
\qtq{and}
\widetilde{m_2}(n):=\frac{-\sqrt{\ell^2-3k^2}-\ell}{3k}\bj{n+\frac{\ell}{2}}-\frac{k}{2},
\end{equation*}
the last relation becomes
\begin{equation}\label{58}
\min\set{\abs{m-\widetilde{m_1}(n)},\abs{m-\widetilde{m_2}(n)}}=o(1)\qtq{as}|n|\to+\infty;
\end{equation}
because the relation $|n|\to+\infty$ implies $|x|\to+\infty$.
If $|n|$ is sufficiently large, say $|n|>N$, then
\begin{equation*}
\min\set{\abs{m-\widetilde{m_1}(n)},\abs{m-\widetilde{m_2}(n)}}<\frac{1}{2}.
\end{equation*}
Observe that at most two integers $m$ may satisfy this inequality: the closest integers to $\widetilde{m_1}(n)$ and $\widetilde{m_2}(n)$.
Denoting these closest integers by $m_1(n)$ and $m_2(n)$ if $|n|>N$, and defining $m_j(n):=\widetilde{m_j}(n)$ otherwise, the set
\begin{equation*}
\set{(m,n)\in B_{k,\ell,R}\ :\ |n|>N}
\end{equation*}
is covered by the two sequences $(m_1(n),n)_{n\in\ZZ}$ and $(m_2(n),n)_{n\in\ZZ}$.
Furthermore,  in view of \eqref{58} and the definition of the sequences $(\widetilde{m_j}(n))$ the relations \eqref{57} are satisfied with
\begin{equation*}
\alpha_j=\frac{\pm\sqrt{\ell^2-3k^2}-\ell}{3k}
\qtq{and}
\beta_j=\frac{\alpha_j\ell-k}{2},\quad j=1,2.\end{equation*}
Note that $\alpha_j\ne 0$.

To complete the proof it remains to show that  the set
\begin{equation*}
\set{(m,n)\in B_{k,\ell,R}\ :\ |n|\le N}
\end{equation*}
is finite.
This follows by observing that for any fixed $n$,  by \eqref{56} at most finitely many integers $m$ may satisfy the inequality $\abs{Q(k,\ell,m,n)}<R$, showing the item (iv).
\end{proof}

As a consequence of the previous lemma, we have the following:

\begin{corollary}\label{c56}
The sets
\begin{equation*}
\Lambda_{k,\ell,R}:=\set{\lambda_{m,n}\ :\ (m,n)\in B_{k,\ell,R}}
\end{equation*}
are sparse for all $(k,\ell)\in\ZZ^2\setminus\set{(0,0)}$ and $R>0$.
\end{corollary}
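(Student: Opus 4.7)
My plan is to argue case by case according to the four situations distinguished in Lemma \ref{l55}, and for each case exhibit $\Lambda_{k,\ell,R}$ as a finite union of sparse sets (using that such unions are sparse by Remarks 1 (iii)) where each piece is either finite or of the form handled by Lemma \ref{l53}.

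Cases (i) and (iii) of Lemma \ref{l55} are immediate: $B_{k,\ell,R}$ is finite, so $\Lambda_{k,\ell,R}$ is finite, hence sparse. In case (ii), $\Lambda_{k,\ell,R}$ is covered by a finite set together with the sequence $\lambda_{m,-\ell/2}=\bj{m,-\tfrac{\ell}{2},m^3+m\bj{\tfrac{\ell^2}{4}-1}}$ indexed by $m\in\ZZ^*$. I would apply Lemma \ref{l53} with $g(m)=(m,-\ell/2)$ and $f(m)=m^3+m(\ell^2/4-1)$; the difference $f(m+1)-f(m)=3m^2+3m+\ell^2/4$ clearly tends to $+\infty$ as $|m|\to\infty$, so the sequence is sparse and we conclude.

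Case (iv) is the substantive step, and the only one where I expect any real work. Here $\Lambda_{k,\ell,R}$ is covered by a finite set and the two sequences $\Lambda_j=\set{\lambda_{m_j(n),n}\ :\ n\in\ZZ}$, $j=1,2$, where $m_j(n)=\alpha_j n+\beta_j+o(1)$ with $\alpha_j\ne 0$. For each $j$ I would invoke Lemma \ref{l53} with $g(n)=(m_j(n),n)$ and
\begin{equation*}
f(n):=m_j(n)^3+m_j(n)n^2-m_j(n).
\end{equation*}
The task reduces to verifying that $f(n+1)-f(n)\to\pm\infty$ as $|n|\to\infty$. Writing $m_+=m_j(n+1)$, $m_-=m_j(n)$, one has $m_+-m_-=\alpha_j+o(1)$, $m_+^3-m_-^3=(m_+-m_-)(m_+^2+m_+m_-+m_-^2)\sim 3\alpha_j^3n^2$, and $m_+(n+1)^2-m_-n^2=(m_+-m_-)n^2+m_+(2n+1)\sim 3\alpha_j n^2$, whence
\begin{equation*}
f(n+1)-f(n)\sim 3\alpha_j(\alpha_j^2+1)\,n^2\qtq{as}|n|\to\infty.
\end{equation*}
Since $\alpha_j\ne 0$ (so $\alpha_j(\alpha_j^2+1)\ne 0$), this expression tends to $+\infty$ if $\alpha_j>0$ and to $-\infty$ if $\alpha_j<0$. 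Either sign is admissible in Lemma \ref{l53}, so each $\Lambda_j$ is sparse, and $\Lambda_{k,\ell,R}$ is a finite union of sparse sets, hence sparse.

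The main (and only) obstacle is the asymptotic computation in case (iv): one must verify that although the quadratic form $k(3m^2+n^2)+2\ell mn$ in $Q$ is indefinite along the approximate lines $m\approx \alpha_j n+\beta_j$, the additional cubic term $m^3$ still forces the diagonal differences $f(n+1)-f(n)$ to grow quadratically and hence reinstates the hypothesis of Lemma \ref{l53}. Once this is checked, Remarks 1 (iii)--(iv) on finite unions and finite sets of sparse sets assemble the four cases into the stated conclusion.
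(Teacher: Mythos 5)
Your proposal is correct and follows essentially the same route as the paper: the same four-case decomposition from Lemma \ref{l55}, with Lemma \ref{l53} applied to $f(m)=m^3+m(\ell^2/4-1)$ in case (ii) and to $f(n)=m_j(n)^3+m_j(n)n^2-m_j(n)$ in case (iv), arriving at the same leading term $3\alpha_j(\alpha_j^2+1)n^2$ for the differences. The only (immaterial) difference is that you compute $f(n+1)-f(n)$ by factoring the differences directly rather than by first expanding $f(n)$ asymptotically, and your expression for $f$ in case (ii) corrects a small typo in the paper's version.
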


\begin{proof}
We distinguish the four cases of Lemma \ref{l55}.
\medskip

(i) If $3k^2>\ell^2$, then the set $B_{k,\ell,R}$ is finite, so that $\Lambda_{k,\ell,R}$ is finite, too, and hence it is sparse.
\medskip

(ii) If $k=0$, then $B_{k,\ell,R}$ is covered by  a finite set and the sequence
\begin{equation*}
\bj{m,-\frac{\ell}{2}},\quad m\in\ZZ^*.
\end{equation*}
Hence it is sufficient to prove that the set
\begin{equation*}
\set{\bj{m,-\frac{\ell}{2},m^3+\frac{m\ell^2}{4}-1}\ :\ m\in\ZZ}
\end{equation*}
is sparse.
This follows from Lemma \ref{l53} with $N=3$ and $f(m)=m^3+\frac{m\ell^2}{4}-1$.
The assumption \eqref{52} of the lemma is satisfied because
\begin{equation*}
f(m+1)-f(m)=3m(m+1)+1+\frac{\ell^2}{4}\to +\infty\qtq{as}|m|\to +\infty.
\end{equation*}
\medskip

(iii) As in case (i), $B_{k,\ell,R}$ is finite, so that $\Lambda_{k,\ell,R}$ is finite, and hence sparse.
\medskip

(iv) Similarly to the case (ii), it suffices to show that the two sets
\begin{equation*}
S_j:=\set{(m_j(n),n,m_j(n)^3+m_j(n)n^2-m_j(n))\ :\ n\in\ZZ},\quad j=1,2
\end{equation*}
are sparse.
We are going to apply Lemma \ref{l53}, so let us rewrite these sets in the form
\begin{equation*}
S_j=\set{(g_j(n),f_j(n))\ :\ n\in\ZZ}
\end{equation*}
with
\begin{equation*}
g_j(n)=(m_j(n),n)\qtq{and}f_j(n)=m_j(n)^3+m_j(n)n^2-m_j(n).
\end{equation*}
Since the following treatment is identical for both values of $j$, we omit the subscript $j$ for simplicity.
By Lemma \ref{l55} (iv) there exist two constants $\alpha\ne 0$ and $\beta$ such that
\begin{equation*}
m(n)=\alpha n+\beta+o(1)\qtq{as}|n|\to+\infty.
\end{equation*}
Using this we obtain the relation (henceforth all relations are understood for $|n|\to+\infty$)
\begin{equation*}
f(n)=m(n)^3+m(n)n^2-m(n)
=(\alpha^3+\alpha)n^3+(3\alpha^2\beta+\beta)n^2+o(n^2).
\end{equation*}
It follows that
\begin{equation*}
f(n+1)=(\alpha^3+\alpha)(n^3+3n^2)+(3\alpha^2\beta+\beta)n^2+o(n^2),
\end{equation*}
and therefore
\begin{equation*}
f(n+1)-f(n)=3(\alpha^3+\alpha)n^2+o(n^2).
\end{equation*}
Since $3(\alpha^3+\alpha)\ne 0$ by our condition $\alpha\ne 0$, the assumption \eqref{52} of Lemma \ref{l53} is satisfied, and we conclude that $S_1$ and $S_2$ are sparse sets.
\end{proof}

Now we are ready to prove the main proposition of this section which ensures that Theorem \ref{t14} holds.

\begin{proof}[Proof of Proposition \ref{p51}]
Fix a real number $R>0$, and consider the sets
$\Lambda_{k,\ell,R}$, where $(k,\ell)\in\ZZ^2\setminus\set{(0,0)}$, $|k|<R$ and $|\ell|<R$.
They are sparse by Corollary \ref{c56}, therefore their union $\Lambda'$ is also sparse.
By Lemma \ref{l52} it remains to show that $\gamma(\Lambda\setminus \Lambda')\ge R$.

If $\lambda_{m+k,n+\ell}$ and $\lambda_{m,n}$ are two distinct elements of $\Lambda\setminus \Lambda'$, then (see \eqref{53})
\begin{equation*}
\norm{\lambda_{m+k,n+\ell}-\lambda_{m,n}}
=\norm{(k,\ell,Q(k,\ell,m,n))}
\ge\max\set{|k|, |\ell|, |Q(k,\ell,m,n)|}.
\end{equation*}
This obviously yields $\norm{\lambda_{m+k,n+\ell}-\lambda_{m,n}}\ge R$ if $|k|\ge R$ or $|\ell|\ge R$.
Otherwise, we infer from the definition of $\Lambda'$ that
\begin{equation*}
\norm{\lambda_{m+k,n+\ell}-\lambda_{m,n}}
\ge |Q(k,\ell,m,n)|\ge R.
\end{equation*}
This proves the relation $\gamma(\Lambda\setminus \Lambda')\ge R$.
\end{proof}

\section{A review on observation, control and stabilization}\label{s6}

Consider a linear evolutionary problem
\begin{equation}\label{61}
z'=\mathcal{A}z+\mathcal{B}v,\quad z(0)=z_0,
\end{equation}
where $\mathcal{A}$ is a densely defined, closed linear operator in some Hilbert space $H$, and $\mathcal{B}$ is a bounded linear operator of another Hilbert space $G$ into $D(\mathcal{A}^*)'$.
Let us also consider the dual problem
\begin{equation}\label{62}
u'=-\mathcal{A}^*u,\quad u(0)=u_0,\quad \psi=\mathcal{B}^*u.
\end{equation}
Here $\mathcal{A}^*, \mathcal{B}^*$ denote the adjoints of $\mathcal{A}$ and $\mathcal{B}$.

In control-theoretical terminology, $\mathcal{B}$ is a control operator, $u$ is a control, and $\mathcal{B}^*$ is an observability operator.
Assume that the following three hypotheses are satisfied (we denote by $G', H'$ the (anti)dual spaces of $G$ and $H$, respectively):
\begin{itemize}
\item[(H1)] The operator $\mathcal{A}^*$ generates a group $e^{s\mathcal{A}^*}$ in $H'$;

\item[(H2)] We have $D(\mathcal{A}^*)=D(\mathcal{B}^*)$, and there exist two numbers $\lambda\in\CC$ and $c\in\RR$ such that
\begin{equation*}
\norm{\mathcal{B}^*u}\le c\norm{(\mathcal{A}+\lambda I)^*u}\qtq{for all}u_0\in D(\mathcal{A}^*).
\end{equation*}

\item[(H3)] There exist two  numbers $T', c'>0$ such that
\begin{equation*}
\norm{\mathcal{B}^*u}_{L^2(0,T';G')}
\le c'\norm{u_0}_{H'}\qtq{for all}\varphi_0\in D(\mathcal{A}^*).
\end{equation*}
\end{itemize}

It follows from (H1) that for every $u_0\in H'$ the equation in \eqref{62} has a unique solution $u\in C(\RR, H')$, and that
$u\in C(\RR, D(\mathcal{A}^*))\cap C^1(\RR, H')$ if $u_0\in D(\mathcal{A}^*)$. Moreover, note that hypothesis (H2) is a weakening of the boundedness of the operator $\mathcal{B}$; it is satisfied in many problems where the observation is made on the boundary or some non-open set of the underlying domain like in the case of Theorem \ref{t11}.

It follows from hypotheses (H1) and (H2) that hypothesis (H3) is meaningful. It is a (hidden) regularity property, allowing us to extend the observation to $\psi\in L^2_{\text{loc}}(\RR, G')$ for every $u_0\in H'$. Moreover, (H1)--(H3) imply the following strengthened version of (H3): for every $T>0$ there exists a constant $c_T$ such that the solutions of \eqref{62} satisfy the estimates
\begin{equation}\label{63}
\norm{\psi}_{L^2(-T,T;G')}
\le c_T\norm{u_0}_{H'}\qtq{for all}u_0\in H'.
\end{equation}
This result also shows that the value of $T'$ in (H3) has no importance: if it is satisfied for some $T'>0$, then, in fact, it is satisfied for \emph{every} $T'>0$.

Next, we define the solutions of \eqref{61} by transposition. Fix $z_0\in H$ and $v\in L^2_{\text{loc}}(\RR, G)$  arbitrarily.
Multiply the equation in \eqref{61} by the solution $u$ of the equation in \eqref{62}. Integrating by parts formally between $0$ and $T\in\RR$, we easily obtain the identity
\begin{equation}\label{64}
\vv{z(T),u(T)}_{H,H'}
=\vv{z_0,u_0}_{H,H'}
+\int_0^T\vv{z(S),\psi(S)}_{G,G'}\, ds.
\end{equation}

\begin{definition}
We define a solution of \eqref{61} as a  function $z:\RR\to H$ satisfying the identity \eqref{64} for all $u_0\in H'$ and $T\in\RR$.
\end{definition}

The previous definition is justified by the following proposition:

\begin{proposition}
Given $z_0\in H$ and $v\in L^2_{\text{loc}}(\RR, G)$  arbitrarily, the problem \eqref{61} has a unique solution.
Moreover, $z:\RR\to H$ is continuous, and satisfies for every $T>0$ the following estimate:
$$
\norm{z}_{L^{\infty}(-T,T; H)}
\le c_T\bj{\norm{z_0}_H+\norm{v}_{L^2(-T,T; G)}}.
$$
\end{proposition}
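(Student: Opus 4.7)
The plan is to use the transposition method, treating the identity \eqref{64} as the \emph{definition} of a solution rather than a consequence. Fix $T\in\RR$. By (H1), the operator $\mathcal{A}^*$ generates a group on $H'$, so the map $u_0\mapsto u(T)$ is an isomorphism of $H'$ onto itself; I may therefore parametrize solutions of \eqref{62} by their terminal value $u_T\in H'$. Define a linear form $L_T$ on $H'$ by
$$
L_T(u_T):=\vv{z_0,u(0)}_{H,H'}+\int_0^T\vv{v(s),\psi(s)}_{G,G'}\,ds,
$$
where $u$ is the unique solution of \eqref{62} with $u(T)=u_T$ and $\psi=\mathcal{B}^*u$ (the integrand being understood as in \eqref{64}, with $v$ in place of $z$ since it is $v$ that naturally pairs with $\mathcal{B}^*u$).

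First I would establish continuity of $L_T$ on $H'$. The group estimate of (H1) yields $\norm{u(0)}_{H'}\le c_T\norm{u_T}_{H'}$, and the strengthened observability inequality \eqref{63}, already deduced from (H1)--(H3), gives $\norm{\psi}_{L^2(0,T;G')}\le c_T'\norm{u(0)}_{H'}\le c_T''\norm{u_T}_{H'}$. Cauchy--Schwarz then produces
$$
\abs{L_T(u_T)}\le c_T'''\bj{\norm{z_0}_H+\norm{v}_{L^2(0,T;G)}}\norm{u_T}_{H'}.
$$
The Riesz representation theorem (using the identification of $H$ with its antidual) yields a unique $z(T)\in H$ with $\vv{z(T),u_T}=L_T(u_T)$ for every $u_T\in H'$, and the asserted norm estimate on $z(T)$ follows immediately. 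Since every $u_0\in H'$ arises as $u(0)$ for some $u_T=e^{T\mathcal{A}^*}u_0$, the identity \eqref{64} holds for all admissible test data. Uniqueness is automatic: two such $z$ would define the same continuous linear form on $H'$ for every $T$, and hence would coincide.

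The remaining point, and the most delicate, is continuity of $T\mapsto z(T)$ in $H$. I would fix $T_0\in\RR$ and $h\in H'$ with $\norm{h}_{H'}\le 1$, and set $u^{(T)}_T:=e^{(T-T_0)\mathcal{A}^*}h$ so that the corresponding dual solution $u^{(T)}$ satisfies $u^{(T)}(T_0)=h$ for every $T$. Writing out $\vv{z(T)-z(T_0),h}$ via the two instances of $L$, the difference splits into a boundary term $\vv{z_0,u^{(T)}(0)-u^{(T_0)}(0)}$, handled by strong continuity of $e^{s\mathcal{A}^*}$, and a difference of integrals $\int_0^T\vv{v,\mathcal{B}^*u^{(T)}}\,ds-\int_0^{T_0}\vv{v,\mathcal{B}^*u^{(T_0)}}\,ds$, controlled by absolute continuity of the integral together with \eqref{63} and the group bounds. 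Taking the supremum over $h$ yields $\norm{z(T)-z(T_0)}_H\to 0$.

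The main obstacle is precisely this last bookkeeping: one must reparametrize the dual problem so that the two instants $T$ and $T_0$ sit on a common footing, and combine (H1) with \eqref{63} to control each piece uniformly. Once this is done, the quantitative bound $\norm{z}_{L^{\infty}(-T,T;H)}\le c_T(\norm{z_0}_H+\norm{v}_{L^2(-T,T;G)})$ follows by repeating the construction on $[-T,T]$ (using the group property to handle negative times) and invoking the uniformity of the constant $c_T$ in \eqref{63}.
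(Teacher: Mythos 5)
The paper states this proposition without proof --- Section \ref{s6} is an expository review, and the result is implicitly referred to the literature (Dolecki--Russell, Lions, Komornik) --- so there is no in-paper argument to compare against. Your transposition construction is the standard one and is essentially correct: you rightly treat \eqref{64} as the definition, you correctly note that the integrand in \eqref{64} should pair $v(s)$ (not $z(s)$) with $\psi(s)=\mathcal{B}^*u(s)$, and the boundedness of $L_T$ via the group bound of (H1) together with the extended regularity estimate \eqref{63}, followed by Riesz representation, gives existence, uniqueness, and the quantitative bound on $\norm{z(T)}_H$ exactly as needed. (Minor point: with the paper's convention $u(t)=e^{-t\mathcal{A}^*}u_0$, the terminal datum forcing $u^{(T)}(T_0)=h$ is $e^{-(T-T_0)\mathcal{A}^*}h$, not $e^{(T-T_0)\mathcal{A}^*}h$; this sign does not affect anything.)

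The one step where your sketch is genuinely thin is the continuity of $T\mapsto z(T)$. You propose to bound $\sup_{\norm{h}_{H'}\le 1}\abs{\vv{z(T)-z(T_0),h}}$ and to dispose of the resulting terms by ``strong continuity of $e^{s\mathcal{A}^*}$.'' As written this does not close: strong continuity gives $\norm{(e^{(T-T_0)\mathcal{A}^*}-I)h}\to 0$ for each \emph{fixed} $h$, but not uniformly over the unit ball (that would be norm continuity of the group, which fails in general). To repair it, transfer the group difference onto fixed vectors by duality: the term $\vv{z_0,(e^{T\mathcal{A}^*}-e^{T_0\mathcal{A}^*})h}$ equals $\vv{(e^{T\mathcal{A}}-e^{T_0\mathcal{A}})z_0,h}$, whose supremum over $h$ is $\norm{(e^{T\mathcal{A}}-e^{T_0\mathcal{A}})z_0}_H\to 0$ by strong continuity of the adjoint group applied to the fixed datum $z_0$; similarly, the functional $w\mapsto\int_0^{T_0}\vv{v(s),\mathcal{B}^*e^{(T_0-s)\mathcal{A}^*}w}\,ds$ is represented by a fixed $\zeta\in H$ (by \eqref{63}), and the offending term becomes $\vv{(e^{(T-T_0)\mathcal{A}}-I)\zeta,h}$, again handled by strong continuity on a fixed vector. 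Alternatively, and more simply, prove continuity for the dense class $z_0\in D(\mathcal{A})$, $v$ smooth (where $z$ is a classical solution) and conclude by the uniform bound you have already established. With either repair the argument is complete.
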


\begin{definitions}Now we introduce two definitions:
\begin{itemize}
\item The system \eqref{61} is \emph{exactly controllable} in time $T>0$ if for  every pair of data $z_0, z_1\in H$ there exists a function $v\in L^2_{\text{loc}}(\RR, G)$ such that
\begin{equation*}
\norm{v}_{L^2(0,T; G)}\le c'\bj{\norm{z_0}_H+\norm{z_1}_H},
\end{equation*}
and that the solution of \eqref{61} satisfies the final condition $z(T)=z_1$.

\item The system \eqref{62} is \emph{observable} in time $T>0$ if its solutions satisfy the estimates
$$
\norm{u_0}_{H'}\le c''\norm{\psi}_{L^2(0,T; G')}\qtq{for all}u_0\in H'.
$$
\end{itemize}
\end{definitions}
We note that, similarly to \eqref{63}, by  a density argument the observability is equivalent to the following, apparently weaker condition:
\begin{equation}\tag{H4}
\norm{u_0}_{H'}\le c''\norm{\psi}_{L^2(0,T; G')}\qtq{for all}u_0\in D(A^*).
\end{equation}

The following important duality result extends a classical finite-dimensional theorem; it was essentially proved in \cite{DolRus-1977}:

\begin{theorem}\label{t62}
Assume (H1)--(H3).
For any given $T>0$, the system \eqref{61} is exactly controllable if and only if the system \eqref{62} is observable.
\end{theorem}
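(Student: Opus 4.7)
The plan is to derive both implications from the transposition identity \eqref{64}, using the Hilbert Uniqueness Method of J.-L. Lions for the harder direction (observability $\Rightarrow$ exact controllability) and a standard duality/open-mapping argument for the reverse.

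For the direction ``observability implies exact controllability,'' given $z_0, z_1\in H$, I would introduce the quadratic functional
\begin{equation*}
J(u_0):=\tfrac{1}{2}\int_0^T\norm{\mathcal{B}^*u(s)}_{G'}^2\,ds+\vv{z_0,u_0}_{H,H'}-\vv{z_1,u(T)}_{H,H'}
\end{equation*}
on $H'$, where $u$ is the unique solution of \eqref{62} with $u(0)=u_0$. Thanks to the hidden-regularity bound \eqref{63} together with the group property (H1), $J$ is continuous, strictly convex, and coercive; coercivity comes from combining the observability estimate (H4) with the uniform bound $\norm{u(T)}_{H'}\le c\norm{u_0}_{H'}$ furnished by (H1). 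Hence $J$ admits a unique minimizer $\tilde{u}_0\in H'$, whose norm is bounded by a constant multiple of $\norm{z_0}_H+\norm{z_1}_H$. Writing the Euler--Lagrange equation at $\tilde{u}_0$ and setting $v:=\mathcal{B}^*\tilde{u}$ yields
\begin{equation*}
\int_0^T\vv{v(s),\mathcal{B}^*w(s)}_{G,G'}\,ds=\vv{z_1,w(T)}_{H,H'}-\vv{z_0,w_0}_{H,H'}\qtq{for every}w_0\in H'.
\end{equation*}
Comparing this with \eqref{64} applied to the solution $z$ of \eqref{61} driven by this $v$ and starting from $z_0$ gives $\vv{z(T)-z_1,w(T)}_{H,H'}=0$ for every $w_0\in H'$; since $w_0\mapsto w(T)$ is a bijection of $H'$ by (H1), this forces $z(T)=z_1$, and the required bound on $\norm{v}_{L^2(0,T;G)}$ follows from \eqref{63}.

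For the converse I would consider the bounded input-to-state map $\Phi_T:L^2(0,T;G)\to H$, $\Phi_T v:=z(T)$, for the solution of \eqref{61} with $z_0=0$. Since the group also acts on $H$ by duality from (H1), exact controllability is equivalent to $\Phi_T$ being surjective. The open mapping (closed range) theorem then produces a lower bound $\norm{\Phi_T^*y}_{L^2(0,T;G')}\ge c\norm{y}_{H'}$ for every $y\in H'$. A direct duality computation based on \eqref{64} identifies $\Phi_T^*y=\mathcal{B}^*u$, where $u$ is the adjoint solution with terminal value $u(T)=y$; combining this identification with the equivalence between $\norm{u_0}_{H'}$ and $\norm{u(T)}_{H'}$ provided by (H1), the lower bound translates precisely into the observability inequality (H4).

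The main obstacle is the functional-analytic setup rather than the variational calculations themselves: the observability operator $\mathcal{B}^*$ is a priori only defined on $D(\mathcal{A}^*)$, so every pairing and integral above has to be given a meaning for data $u_0\in H'$. This is exactly the role of hypotheses (H2)--(H3), which through \eqref{63} extend the map $u\mapsto\mathcal{B}^*u$ continuously from $D(\mathcal{A}^*)$ to all of $H'$, and thereby allow the transposition identity \eqref{64} to be extended by density to the full admissible class of data. Once this extension is in place, both implications reduce to the two routine computations sketched above.
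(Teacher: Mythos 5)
Your argument is correct and is essentially the standard Dolecki--Russell/Lions duality proof that the paper itself does not reproduce but delegates to the cited reference \cite{DolRus-1977}: the HUM minimization of the quadratic functional for the implication observability $\Rightarrow$ exact controllability (with coercivity from the observability inequality extended by density via \eqref{63}) and the closed-range/adjoint computation for the converse are both sound. One small point worth noting is that the transposition identity \eqref{64} as printed contains a typo --- the integrand should read $\vv{v(S),\psi(S)}_{G,G'}$ rather than $\vv{z(S),\psi(S)}_{G,G'}$ --- and your Euler--Lagrange computation correctly uses the intended form.
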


Next, we state from \cite{Kom-1997} a rapid stabilization theorem that extended a former result of Slemrod \cite{Slemrod-1974} on bounded control operators:

\begin{theorem}\label{t63}
Assume (H1)--(H4) for some $T>0$, and fix $\omega>0$ arbitrarily.
There exists a linear map $F_{\omega}:H\to G$ and a constant $C>0$ such that
the  closed-loop problem
\begin{equation*}
z'=\mathcal{A}z+\mathcal{B}F_{\omega}z, \quad z(0)=z_0,
\end{equation*}
is well-posed in $H$, and its solutions satisfy the estimate
\begin{equation*}
\norm{z(t)}_H \leq C\norm{z_0}_H e^{-\omega t},
\end{equation*}
for all $z_0 \in H$ and $t \geq 0$.
\end{theorem}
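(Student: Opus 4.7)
The plan is to construct the feedback through a weighted Gramian operator, following the Lyapunov strategy that Komornik designed for unbounded observation. The first step is to upgrade (H4) by inserting an exponential weight: on the interval $[0,T]$ the factor $e^{2\omega(T-t)}$ lies between two positive constants depending only on $\omega$ and $T$, so (H4) combined with the hidden regularity \eqref{63} yields constants $c_\omega, C_\omega>0$ with
$$
c_\omega\norm{u_0}_{H'}^2 \le \int_0^T e^{2\omega(T-t)} \norm{\mathcal{B}^*u(t)}_{G'}^2\,dt \le C_\omega\norm{u_0}_{H'}^2
$$
for every $u_0\in D(\mathcal{A}^*)$, and hence by density for every $u_0\in H'$.

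The second step is to introduce the symmetric bilinear form
$$
a_\omega(u_0,v_0):=\int_0^T e^{2\omega(T-t)}\vv{\mathcal{B}^*u(t),\mathcal{B}^*v(t)}_{G'}\,dt,
$$
where $u,v$ are the solutions of \eqref{62} with data $u_0,v_0\in H'$. The two-sided estimate from the first step shows that $a_\omega$ is continuous and coercive on $H'\times H'$, so its representing operator $\Lambda_\omega:H'\to H$ is self-adjoint, positive, and a topological isomorphism. I would then define the feedback law
$$
F_\omega z:=-\mathcal{B}^*\Lambda_\omega^{-1}z\qtq{for}z\in H.
$$

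The third step is to establish well-posedness and decay together. Differentiating the integral definition of $\Lambda_\omega$ along the group generated by $\mathcal{A}^*$ yields a Lyapunov-type operator identity from which one reads off, on the one hand, that $\mathcal{A}+\mathcal{B}F_\omega$ generates a $C_0$-semigroup on $H$, and, on the other hand, that along trajectories the functional
$$
V(t):=\vv{\Lambda_\omega^{-1}z(t),z(t)}_{H',H}\asymp\norm{z(t)}_H^2
$$
satisfies $V'(t)\le -2\omega V(t)$. Integrating this inequality gives the target estimate $\norm{z(t)}_H\le C e^{-\omega t}\norm{z_0}_H$.

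The main obstacle will be making the unbounded pieces rigorous. Since $\mathcal{B}^*$ is only defined on $D(\mathcal{A}^*)$, one must verify that $\Lambda_\omega^{-1}$ maps $H$ into a subspace on which $\mathcal{B}^*$ acts, and that $\mathcal{B}F_\omega$ is admissible in the transposition sense used to define solutions of \eqref{61}. Hypothesis (H2) is precisely the tool that controls $\mathcal{B}^*$ by the resolvent of $\mathcal{A}^*$ and thereby legitimates the closed-loop semigroup construction. The Lyapunov inequality must likewise be justified first for initial data in $D(\mathcal{A})$ and then extended to general $z_0\in H$ by density and the continuity of the closed-loop evolution.
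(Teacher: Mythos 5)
The paper does not actually prove this theorem: it is quoted from \cite{Kom-1997}, and your proposal is essentially a reconstruction of the proof given there --- a weighted observability Gramian made coercive by (H4), the feedback $F_\omega=-\mathcal{B}^*\Lambda_\omega^{-1}$, the Lyapunov functional $V=\vv{\Lambda_\omega^{-1}z,z}$, and the use of (H2) together with the hidden regularity \eqref{63} to make sense of the unbounded pieces. So the strategy is the correct one and coincides with the cited source; the deferred well-posedness work in your last paragraph is exactly what \cite{Kom-1997} supplies.

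There is, however, one concrete flaw: the weight $e^{2\omega(T-t)}$ does not produce $V'\le-2\omega V$. Writing $f$ for the weight, $w=\Lambda_\omega^{-1}z(t)$ and differentiating $V$ along the closed loop, the integration by parts in the Gramian (using $f'=-2\omega f$) gives
\begin{equation*}
V'(t)=-2\omega V(t)+\bj{f(0)-2}\norm{\mathcal{B}^*w}_{G'}^2-f(T)\norm{\mathcal{B}^*e^{-T\mathcal{A}^*}w}_{G'}^2.
\end{equation*}
The last term is $\le 0$ and harmless, but the middle term is \emph{positive} as soon as $f(0)=e^{2\omega T}>2$, and it cannot be absorbed by the observability inequality without destroying the decay rate (that only yields $V'\le(-2\omega+C)V$). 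Note that rescaling the weight is not cosmetic here: it rescales $\Lambda_\omega^{-1}$ and hence changes the feedback gain. The cure is Komornik's actual choice: take a weight with $f(0)\le 2$ and $f'+2\omega f\le 0$, e.g.\ $f(t)=e^{-2\omega t}$ (in \cite{Kom-1997} it is further extended by a linear taper to $0$ on $[T,T+\tfrac{1}{2\omega}]$); then the middle term becomes $-\norm{\mathcal{B}^*w}_{G'}^2\le 0$, coercivity of $\Lambda_\omega$ still follows from (H4) since $f\ge e^{-2\omega T}$ on $[0,T]$, and the estimate $\norm{z(t)}_H\le C e^{-\omega t}\norm{z_0}_H$ follows as you describe. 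With that correction your outline is sound.
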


\section{Controllability and stabilization results}\label{s7}

With the presentation of the previous review, we are in a position to prove Theorems \ref{t15}--\ref{t18} by applying Theorems \ref{t62} and \ref{t63}.

\begin{proof}[Proof of Theorems \ref{t17} and \ref{t18}] These theorems will follow with the following choices:
\begin{align*}
&H':=L^2_x(\TT\times\TT),\quad
G':=L^2(\Omega);\\
&\mathcal{A}^*u:=u_x+u_{xxx}+ u_{xyy},\quad
\mathcal{B}^*u:=u|_{\Omega};
\end{align*}
and
\begin{align*}
&D(\mathcal{A}^*)=D(\mathcal{B}^*)=\set{u\in H'\ :\ u_x+u_{xxx}+ u_{xyy}\in H'}.
\end{align*}
Using the Fourier expansion
\begin{equation}\label{71}
u(x,y)=\sum_{(m,n)\in\ZZ^*\times\ZZ}c_{m,n}e^{i(mx+ny)}
\end{equation}
of the elements of $H'$ and Parseval's formula
\begin{equation*}
\norm{u}_{H'}^2=4\pi^2\sum_{(m,n)\in\ZZ^*\times\ZZ}\abs{c_{m,n}}^2,
\end{equation*}
we may show that an element $u\in H'$ belongs to $D(\mathcal{A}^*)=D(\mathcal{B}^*)$ if and only if
\begin{equation}\label{72}
\sum_{(m,n)\in\ZZ^*\times\ZZ}\abs{m(m^2+n^2-m)}^2\abs{c_{m,n}}^2<+\infty.
\end{equation}
Indeed, using formula \eqref{71} we have
\begin{equation}\label{73}
(u_x+u_{xxx}+ u_{xyy})(x,y)=\sum_{(m,n)\in\ZZ^*\times\ZZ}m(m^2+n^2-m)c_{m,n}e^{i(mx+ny)},
\end{equation}
whence $u_x+u_{xxx}+ u_{xyy}\in L^2(\TT\times\TT)$ if and only if the condition \eqref{72} is satisfied.
Furthermore, since the index $m=0$ is missing in \eqref{73} by our assumption $u\in H'$,  $u_x+u_{xxx}+ u_{xyy}$ also belongs to $H'$ if and only if it belongs to $L^2(\TT\times\TT)$.

Hypothesis (H1) follows at once from the representation \eqref{71}.
The following computation shows that hypothesis (H2) is satisfied with $\lambda=1$:
\begin{align*}
\norm{(I+\mathcal{A}^*)u}_{H'}^2
&=\int_{\TT\times\TT}\abs{u+u_x+u_{xxx}+u_{xyy}}^2\, dx\, dy\\
&=\int_{\TT\times\TT}\abs{\sum_{(m,n)\in\ZZ^*\times\ZZ}\bj{1+im(m^2+n^2-1)}c_{m,n}e^{i(mx+ny)}}^2\, dx\, dy\\
&=4\pi^2\sum_{(m,n)\in\ZZ^*\times\ZZ}
\abs{1+im(m^2+n^2-1)}^2\abs{c_{m,n}}^2\\
&\ge 4\pi^2\sum_{(m,n)\in\ZZ^*\times\ZZ}
\abs{c_{m,n}}^2
=\norm{u}_{H'}^2
=\int_{\TT\times\TT}\abs{u}^2\, dx\, dy\\
&\ge \int_{\Omega}\abs{u}^2\, dx\, dy
=\norm{\mathcal{B}^*u}_{G'}^2.
\end{align*}
Hypotheses (H3) and (H4) are equivalent to the two-sided estimates of Theorem \ref{t15} with $U:=\Omega\times (0,T)$.
We may thus applying Theorems \ref{t62} and \ref{t63} to obtain Theorems \ref{t17} and \ref{t18}, respectively.
\end{proof}

\begin{proof}[Proof of Theorems \ref{t15} and \ref{t16}] Now, we make the following choices:
\begin{align*}
&H':=L^2(\TT\times\TT),\quad
G':=L^2(\set{x_0}\times\TT);\\
&\mathcal{A}^*u:=u_x+u_{xxx}+ u_{xyy},\quad
\mathcal{B}^*u:=u|_{\set{x_0}\times\TT};
\end{align*}
and
\begin{align*}
&D(\mathcal{A}^*)=D(\mathcal{B}^*)=\set{u\in H'\ :\ u_x+u_{xxx}+ u_{xyy}\in H'}.
\end{align*}
Note that $L^2(\set{x_0}\times\TT)$ is isometrically isomorphic to (and hence may be identified with) $L^2(\TT)$.

Hypothesis (H1) now follows from the representation
$$
u(x,y)=\sum_{(m,n)\in\ZZ^2}c_{m,n}e^{i(mx+ny)}
$$
of the elements of $H'$ and Parseval's formula
\begin{equation*}
\norm{u}_{H'}^2=4\pi^2\sum_{(m,n)\in\ZZ^2}\abs{c_{m,n}}^2.
\end{equation*}

For the verification of hypothesis (H2) first, we observe that
\begin{equation}\label{75}
\norm{(I+\mathcal{A}^*)u}_{H'}^2
=4\pi^2\sum_{(m,n)\in\ZZ^2}
\abs{1+im(m^2+n^2-1)}^2\abs{c_{m,n}}^2
\end{equation}
because this part of the preceding similar computation remains valid without the assumption $m\ne 0$.

Next, we apply the Cauchy--Schwarz inequality to obtain the following relations, where we write
\begin{equation*}
\mu_{m,n}:=\abs{1+im(m^2+n^2-1)}
\end{equation*}
for brevity:
\begin{align*}
\norm{\mathcal{B}^*u}_{G'}^2
&=\int_{\TT}\abs{\sum_{(m,n)\in\ZZ^2}c_{m,n}e^{i(mx_0+ny)}}^2\, dy
=2\pi\sum_{n\in\ZZ}\abs{\sum_{m\in\ZZ}c_{m,n}e^{imx_0}}^2\\
&\le 2\pi\sum_{n\in\ZZ}\bj{\bj{\sum_{m\in\ZZ}\abs{c_{m,n}e^{imx_0}\mu_{m,n}}^2}
\cdot\bj{\sum_{m\in\ZZ}\frac{1}{\mu_{m,n}^2}}}\\
&=2\pi\sum_{n\in\ZZ}\bj{\bj{\sum_{m\in\ZZ}\abs{c_{m,n}\mu_{m,n}}^2}
\cdot\bj{\sum_{m\in\ZZ}\frac{1}{\mu_{m,n}^2}}}.
\end{align*}
Since $\mu_{0,n}=1$, and $\mu_{m,n}\ge \abs{m}$ if $m\in\ZZ^*$, we have
\begin{equation*}
\sum_{m\in\ZZ}\frac{1}{\mu_{m,n}^2}
\le 1+2\sum_{k=1}^{\infty}\frac{1}{k^2}
<5,
\end{equation*}
for every $n\in\ZZ$, and therefore we infer from the preceding estimate that
\begin{equation*}
\norm{\mathcal{B}^*u}_{G'}^2
\le 10\pi\sum_{(m,n)\in\ZZ^2}\abs{1+im(m^2+n^2-1)}^2\abs{c_{m,n}}^2.
\end{equation*}
Combining with \eqref{75} and noticing that $10\pi<4\pi^2$,  we obtain that
\begin{equation*}
\norm{\mathcal{B}^*u}_{G'}\le \norm{(I+\mathcal{A}^*)u}_{H'},
\end{equation*}
i.e., hypothesis (H2) is satisfied.

Finally, hypotheses (H3) and (H4) are equivalent to the two-sided estimates of Theorem \ref{t14}.
Applying Theorems \ref{t62} and \ref{t63},  Theorems \ref{t17} and \ref{t18} follow.
\end{proof}

\section{Further comments}\label{s8}

In this paper, we considered only internal observability and controllability properties of the system \eqref{11}, however boundary observability and controllability results may also be obtained by adapting our approach; see e.g., \cite{KomMia-2014} and \cite{KomLor-2014} for analogous results on membrane and plate models.

Concerning Theorems \ref{t11}, \ref{t12} and \ref{t14} it is natural to ask whether they remain valid in more general situations, for example, if the segments are oblique or they are moving in time (moving observation). For linear plate models, these questions were studied in \cite{JamKom-2020}, and they probably may be adapted to the Zakharov--Kuznetsov equation \eqref{11}, however, these issues are still open to study.

Note that the method developed in this work was based on the notion of sparse sets, it may be adapted to more general equations, containing further higher-order terms, in the two-dimensional cases.

\subsection*{Acknowledgments}
The authors express their gratitude to the referees for their very thorough evaluation of our manuscript and for many helpful comments and suggestions.
Part of this work was done during the stay of the second author at the Department of Mathematics of the Federal University of Rio de Janeiro (UFRJ) and the Department of Mathematics of the Federal University of Pernambuco (Recife) in September 2023.
He thanks the departments for their hospitality.

\section*{Statements \& Declarations}

\subsection*{Funding} 
The first author was supported by CAPES grant numbers 88881.311964/2018-01 and
 88881.520205/2020-01,  CNPq grant numbers 307808/2021-1 and  401003/2022-1,  MATHAMSUD grant 21-MATH-03 and Propesqi (UFPE). 
 The second author was supported by the following grants:  FACEPE APV-0002-1.01/23, Coordenação de Aperfeiçoamento de Pessoal de Nível Superior (CAPES) -- finance Code 001 and NSFC no.~11871348.  
 The third author was partially supported by CNPq (Brasil) and CAPES (Brasil) -- finance Code 001.

%\subsection*{Data Availibility} 
%It does not apply to this article as no new data were created or analyzed in this study.

\subsection*{Competing Interests} 
The authors have no relevant financial or non-financial interests to disclose.

\subsection*{Author Contributions}
All authors contributed equally to this work.
All authors read and approved the final manuscript.

\end{document}